\newtheorem{theorem}{Theorem}[section]
\newtheorem{lemma}[theorem]{Lemma}
\newtheorem{proposition}[theorem]{Proposition}
\newtheorem{corollary}[theorem]{Corollary}
\newtheorem{definition}[theorem]{Definition}
\newtheorem{remark}[theorem]{Remark}
\newtheorem{assumption}[theorem]{Assumption}
\numberwithin{equation}{section}
\newcommand{\R}{{\mathbb{R}}}
\newcommand{\F}{{\mathbb{F}}}
\newcommand{\Ze}{{\mathbb Z}}
\newcommand{\Ce}{{\mathcal C}}
\newcommand{\N}{{\mathbb{N}}}
\newcommand{\x}{\mathsf{x}}
\newcommand{\dd}{\mathbf{d}}
\newcommand{\e}{\mathsf{e}}
\DeclareMathOperator{\diff}{d}
\newcommand{\ra}{\rightarrow}
\newcommand{\ul}{\underline}
\newcommand{\ol}{\overline}
\newcommand{\EE}{\mathds{E}}
\newcommand{\PP}{\mathds{P}}
\newcommand{\st}{\mid}
\newcommand{\h}{\hat}
\begin{document}

\begin{abstract}
In this paper, we provide for the first time an automated, correct-by-construction, controller synthesis scheme for a class of infinite dimensional stochastic systems, namely, retarded jump-diffusion systems. First, we construct finite abstractions approximately bisimilar to non-probabilistic retarded systems corresponding to the original systems having some stability property, namely, incremental input-to-state stability. Then, we provide a result on quantifying the distance between output trajectory of the obtained finite abstraction and that of the original retarded jump-diffusion system in a probabilistic setting. Using the proposed result, one can refine the control policy synthesized using finite abstractions to the original systems while providing guarantee on the probability of satisfaction of high-level requirements. Moreover, we provide sufficient conditions for the proposed notion of incremental stability in terms of the existence of incremental Lyapunov functions which reduce to some matrix inequalities for the linear systems. Finally, the effectiveness of the proposed results is illustrated by synthesizing a controller regulating the temperatures in a ten-room building modelled as a delayed jump-diffusion system.
\end{abstract}

\title[Symbolic Models for Retarded Jump-Diffusion Systems]{Symbolic Models for Retarded Jump-Diffusion Systems}

\author[P. Jagtap]{Pushpak Jagtap$^1$} 
\author[M. Zamani]{Majid Zamani$^{2,3}$} 

\address{$^1$Department of Electrical and Computer Engineering, Technical University of Munich, Germany.}
\email{pushpak.jagtap@tum.de}
\address{$^2$Computer Science Department, University of Colorado Boulder, USA.}
\email{majid.zamani@colorado.edu}
\address{$^3$Computer Science Department, Ludwig Maximilian University of Munich, Germany.}
\maketitle

\section{Introduction}
Finite (a.k.a. symbolic) abstraction techniques have gained significant attention in the last few years since they provide tools for automated, correct-by-construction, controller synthesis for several classes of control systems. In particular, such abstractions provide approximate models that are related to concrete systems by aggregating concrete states and inputs to the symbolic ones. Having such finite abstractions, one can make use of the existing automata-theoretic techniques \cite{maler1995synthesis} to synthesize hybrid controllers enforcing rich complex specifications (usually expressed as linear temporal logic formulae or automata on infinite strings) over the original systems. \\
In the past few years, there have been several results providing bisimilar finite abstractions for various continuous-time non-probabilistic as well as stochastic systems. The results include construction of approximately bisimilar abstractions for incrementally stable control systems \cite{pola2008approximately}, switched systems \cite{5342460}, stochastic control systems \cite{zamani2014symbolic}, and randomly switched stochastic systems \cite{zamani2014approximately}. However, the abstractions obtained in these results are based on state-space quantization which suffer severely from the \emph{curse of dimensionality}, i.e., the computational complexity increases exponentially with respect to the state-space dimension of the concrete system.\\
To alleviate this issue, authors in \cite{Girard3} proposed an alternative approach for constructing approximately bisimilar abstractions for incrementally stable non-probabilistic switched systems without discretizing the state-space. The concept is further extended to provide finite abstractions for incrementally stable stochastic switched systems \cite{zamani2015symbolic}, stochastic control systems \cite{zamani2016towards}, and infinite dimensional non-probabilistic control systems \cite{Girard2}. 
For a comparison between state-space discretization based and free approaches, we refer the interested readers to the discussion in Section 5.4 in \cite{zamani2016towards}.\\
On the other hand, retarded stochastic systems are widely used to model various processes in finance, ecology, medical, and engineering (see, e.g., \cite{shaikhet2013lyapunov,bandyopadhyay2008deterministic,6580622}). However, the construction of symbolic models for such classes of systems are still unaddressed due to underlying challenges such as infinite-dimensional functional state-space and dependency on state history. The authors in \cite{pola2010symbolic,pola} provide the construction of abstractions for incrementally stable non-probabilistic time-delayed systems by spline-based approximation of functional spaces. However, the proposed results are complex from the implementation point of view and also suffer from the curse of dimensionality with respect to the state-space dimension of the concrete system. This motivates our work in this paper to provide a scheme for the construction of finite abstractions for a class of infinite dimensional stochastic systems, namely, retarded jump-diffusion systems without discretizing the state-space.\\
The main contribution of this paper is twofold. First, we introduce a notion of incremental input-to-state stability for retarded jump-diffusion systems and provide sufficient conditions for it in terms of the existence of a notion of incremental Lyapunov functions. In the linear case, we show that the sufficient conditions reduce to a matrix inequality. Second, under the assumption of incremental stability property over retarded jump-diffusion systems, we provide a construction of finite abstractions which are approximately bisimilar to the corresponding non-probabilistic version of retarded systems. Then, under some mild assumptions over incremental Lyapunov functions, we obtain a lower bound on the probability such that the distance between output trajectories of the obtained finite abstraction and those of the original retarded jump-diffusion system remains close over a finite time horizon. One can leverage the proposed probability closeness to synthesize a control policy using constructed finite abstractions and refine it back to the original system while providing guarantee on the probability of satisfaction over the original system. Further, we demonstrate the effectiveness of the proposed results by synthesizing a controller keeping temperatures in a comfort zone in a ten-room building modelled as a linear delayed jump-diffusion system. 

\section{Retarded Jump-Diffusion Systems}\label{II}
\subsection{Notations}\label{II1}
Let the triplet $(\Omega, \mathcal{F}, \mathds{P})$ denote a probability space with a sample space $ \Omega $, filtration $ \mathcal{F} $, and the probability measure $ \mathds{P} $. The filtration $\mathds{F}= (\mathcal{F}_s)_{s\geq 0}$ satisfies the usual conditions of right continuity and completeness \cite{Oks_jump}. The symbols $ \N $, $ \N_0 $, $\Ze$, $ \R$, $\R^+,$ and $\R_0^+ $ denote the set of natural, nonnegative integer, integer, real, positive, and nonnegative real numbers, respectively. We use $ \R^{n\times m} $ to denote a vector space of real matrices with $ n $ rows and $ m $ columns. Symbol $ e_i\in \R^n $ denotes a vector whose all elements are zero, except the $ i^{th} $ one, which is one.
Let the family of continuous functions $ \zeta : [-\tau, 0]\rightarrow \R^n$ be denoted by $ \mathcal{C}([-\tau,0];\R^n) $ with a norm $ \|\zeta\|_{[-\tau,0]}:=\sup_{-\tau\leq \theta\leq 0}\|\zeta(\theta)\| $, where $ \|\cdot\| $ denotes the Euclidean norm in $ \R^n $. We denote by $\zeta\equiv c$ a constant function $\zeta\in \mathcal{C}([-\tau,0];\R^n) $ with value $c\in\R^n$. For $ q>0 $ and $ t\in\R_0^+ $, $ \mathbf{L}_{\mathcal{F}_t}^q([-\tau,0];\R^n) $ denotes the family of all $ \mathcal{F}_t $-measurable $ \mathcal{C}([-\tau,0];\R^n) $-valued random processes $ \phi:=\{\phi(\theta)\mid -\tau\leq\theta\leq 0\}$ such that $ \sup_{-\tau\leq \theta\leq 0}\EE[\|\phi(\theta)\|^q]<\infty $. The notation $\mathcal{C}^b_{\mathcal{F}_0}([-\tau, 0];\R^n)$ denotes the family of all bounded $ \mathcal{F}_0 $-measurable $ \mathcal{C}([-\tau,0];\R^n) $-valued random variables. For a matrix $ A \in \R^{n\times m}$,
$ \|A\| $ represents the Euclidean norm of $ A $.  
We use $ \lambda_{\min}(A) $ and $ \lambda_{\max}(A) $ to denote the minimum and maximum eigenvalue of a symmetric matrix $ A $, respectively. The diagonal set $ \bigtriangleup \subset \R^{2n} $ is defined as $ \bigtriangleup=\{(x,x)|x\in \R^n\} $.\\
The closed ball centered at $ x\in\R^m $ with radius $ R $ is defined by $ \mathcal{B}_R(x)=\{y\in\R^m\mid \|x-y\|\leq R \} $. A set $ B\subseteq\R^m $ is called a \textit{box} if $ B=\prod_{i=1}^m[c_i,d_i] $, where $ c_i,d_i\in \R $ with $ c_i<d_i $ for each $ i\in\{1,\ldots ,m\} $. The \textit{span} of a box $ B $ is defined as $ span(B)=\min\{|d_i-c_i|\st i=1,\ldots ,m\} $, where $ |a| $ represents the absolute value of $ a\in\R  $. By defining $ [\R^m]_\eta:=\{z\in\R^m\mid z_i=\frac{k_i\eta}{\sqrt{m}},k_i\in\Ze\} $, the set $ \bigcup_{b\in[\R^m]_\eta}\mathcal{B}_R(b) $ is a countable covering of $ \R^m $ for any $ \eta\in\R^+ $ and $ R\geq \eta/2 $. For a box $ B\subseteq \R^m $ and $ \eta\leq span(B) $, define the $ \eta $-approximation $ [B]_\eta:=[\R^m]_\eta\cap B $. Note that $ [B]_\eta\neq\varnothing $ for any $ \eta\leq span(B) $. We extend the notions of $ span $ and of approximation to finite unions of boxes as follows. Let $ A=\bigcup_{j=1}^MA_j $, where each $ A_j $ is a box. Define $ span(A)=\min\{span(A_j)\mid j=1,\ldots ,M \} $, and for any $ \eta\leq span(A) $, define $ [A]_\eta=\bigcup_ {j=1}^M[A_j]_\eta $.\\
A continuous function $ \gamma  :\R^+_0 \rightarrow \R^+_0 $ belongs to class $ \mathcal{K} $ if it is strictly increasing and $ \gamma(0)=0 $; it belongs to class $ \mathcal{K}_\infty $ if $ \gamma \in \mathcal{K}  $ and $ \gamma(r) \rightarrow \infty $ as $ r \rightarrow \infty $. A continuous function $ \beta:\R^+_0 \times \R^+_0 \rightarrow \R^+_0 $ belongs to class $\mathcal{KL}$ if for each fixed $ s $, the map $ \beta(r,s) $ belongs to class $ \mathcal{K} $ with respect to $ r $ and, for each fixed $ r\neq0 $, the map $ \beta(r,s) $ is decreasing with respect to $ s $ and $ \beta(r,s) \rightarrow 0 $ as $ s\rightarrow \infty $. Given a measurable function $f: \R^+_0 \rightarrow \R^n $, the (essential) supremum of $ f $ is denoted by $ \|f\|_\infty $; we recall that $ \|f\|_\infty$ := (ess)sup$\{\|f(t)\|,t\geq0\}$. 
We identify a relation $ \mathcal{R}\subseteq A\times B $ with the map $ \mathcal{R}:A\ra 2^B $ defined by $ b\in\mathcal{R}(a) $ if and only if $ (a,b)\in\mathcal{R} $.
\subsection{Retarded Jump-Diffusion Systems}
Let $ (W_s)_{s\geq0} $ be an $ \overline{r} $-dimensional $ \mathds{F} $-Brownian motion and $ (P_s)_{s\geq0} $ be an $ \tilde{r} $-dimensional $ \F $-Poisson process. We assume that the Poisson process and the Brownian motion are independent of each other. The Poisson process $ P_s:=[P_s^1;\ldots ; P_s^{\tilde{r}}] $ models $ \tilde{r} $ kinds of events whose occurrences are assumed to be independent of each other.
\begin{definition} \label{definition1}
	A \textit{retarded jump-diffusion system} (RJDS) is a tuple $\textstyle \Sigma_R=(\R^n, \mathcal{X}, \mathsf{U}, \mathcal{U}, f, g, r)$, where:
	\begin{compactitem}
		\item $\R^n$ is the Euclidean space;
		\item $\mathcal{X}$ is a subset of $\Ce([-\tau,0];\R^n)$, for some $\tau\in\R^+_0$;
		\item $ \mathsf{U} \subseteq \R ^m$ is the bounded input set which is finite unions of boxes;
		\item $ \mathcal{U} $ is a subset of the set of all measurable, locally essentially bounded functions of time from $ \R_0^+ $ to $ \mathsf{U} $;
		\item $f : \mathcal{X}\times \mathsf{U} \rightarrow \R^n$, satisfies the following Lipschitz assumption: there exist constants $L_f$, $L_u \in \R^+$, such that
		$\| f(\mathsf{x}_t,u)-f(\hat{\mathsf{x}}_t,\hat{u}) \|\leq L_f \| \x_t-\hat{\x}_t\|_{[-\tau,0]}+L_u \|u-\hat{u}\|$ for all $ \mathsf{x}_t,  \hat{\mathsf{x}}_t\in \mathcal{X} $  and  all $ u, \hat{u}\in\mathsf{U} $;
		\item $g :\mathcal{X} \rightarrow \R^{n\times \overline{r}}$ satisfies the following Lipschitz assumption: there exists a constant $ L_g\in \R^+_0 $ such that $ \| g(\x_t)-g(\hat{\x}_t) \| \leq L_g\| \x_t-\hat{\x}_t \|_{[-\tau,0]}$ for all $ \x_t, \hat{\x}_t\in \mathcal{X}$;
		\item $r :\mathcal{X} \rightarrow \R^{n\times \tilde{r} }$ satisfies the following Lipschitz assumption: there exists a constant $ L_r\in\R^{+}_0$ such that $ \| r(\x_t)-r(\hat{\x}_t) \| \leq L_r \| \x_t-\hat{\x}_t \|_{[-\tau,0]}$ for all $ \x_t, \hat{\x}_t\in \mathcal{X} $.
	\end{compactitem}
\end{definition}
An $ \R^n $-valued continuous-time process $\xi$ is said to be a \textit{solution process} for $\Sigma_R$ if there exists $ \upsilon\in\mathcal{U} $ satisfying
\begin{align}
	\diff \xi(t) = f(\xi_t,\upsilon(t))\diff t+ g(\xi_t)\diff W_t+r(\xi_t)\diff P_t,
	\label{stocha_delay}
\end{align}
$ \mathbb{P} $-almost surely ($ \mathbb{P} $-a.s.), where $ f$, $g$, and $ r$ are the drift, diffusion, and reset terms, respectively, and $\xi_t:=\{\xi(t+\theta)| -\tau \leq \theta \leq 0\}$. We
emphasize that postulated assumptions on $ f$, $g$, and $ r$ ensure the existence and uniqueness of the solution process $\xi$ on $ t\geq -\tau $ \cite[Theorem 1.19]{Oks_jump}. Throughout the paper we use the notation $\xi_{\zeta,\upsilon}(t)$ to denote the value of a solution process starting from initial condition $ \zeta =\{\xi(\theta)|-\tau\leq\theta\leq 0\}\in \mathcal{C}^b_{\mathcal{F}_0}([-\tau, 0];\R^n)$ $ \mathbb{P} $-a.s. and under the input signal $ \upsilon $ at time $t$. We also use the notation $\xi_{t,\zeta,\upsilon}$ to denote the solution process starting from initial condition $ \zeta =\{\xi(\theta)|-\tau\leq\theta\leq 0\}\in \mathcal{C}^b_{\mathcal{F}_0}([-\tau, 0];\R^n)$ $ \mathbb{P} $-a.s. and under the input signal $ \upsilon $. Note that for any $ t \in \mathbb{R}^+_0 $, $\xi_{\zeta,\upsilon}(t)$ is a random variable taking values in $ \R^n $ and $\xi_{t,\zeta,\upsilon}$ is a random variable taking values in $ \mathcal{C}([-\tau, 0];\R^n) $. Here, we assume that the Poisson processes $P^{i}_s$, for any $i \in \{1, 2,\ldots, \tilde{r}\}$, have the rates of $  \lambda_{i}$. Now we will introduce delayed jump-diffusion system ($\Sigma_D$)(DJDS) as a special case of retarded jump-diffusion system which is given by 	
\begin{align}
	\diff \xi(t)\hspace{-.2em} =& F(\xi(t), \xi(t\hspace{-.1em}-\hspace{-.1em}\tau_1),\upsilon(t))\diff t \hspace{-.2em}+ \hspace{-.2em}G(\xi(t),\xi(t\hspace{-.1em}-\hspace{-.1em}\tau_2))\diff W_t+R(\xi(t),\xi(t\hspace{-.1em}-\hspace{-.1em}\tau_3))\diff P_t,
	\label{stocha_delay1}
\end{align}
where $ F: \R^n \times \R^n \times \mathsf{U} \rightarrow \R^n$, $G: \R^n \times \R^n \rightarrow \R^{n\times \overline{r}}$, and $ R :\R^n \times \R^n \rightarrow \R^{n\times \tilde{r} }$ are the drift, diffusion, and reset terms, respectively. The constants $ \tau_1 $, $ \tau_2 $, and $\tau_3$ are the state delay in the drift, diffusion, and reset terms, respectively.

\subsection{Incremental Stability for RJDS and DJDS}
Here, we introduce a notion of incremental stability for RJDS (resp. DJDS).
\begin{definition} \label{definition3} 
	An RJDS $\Sigma_R$ (resp. DJDS $\Sigma_D$) is incrementally input-to-state stable in the $ q^{th} $ moment, where $ q\geq1 $, denoted by ($ \delta$-ISS-M$_q $), if there exist a $ \mathcal{KL} $ function $ \beta $ and a $ \mathcal{K}_\infty $ function $ \gamma $ such that for any $ t \in \mathbb{R}^+_0 $, any two initial conditions $ \zeta, \hat{\zeta} \in \mathcal{C}^b_{\mathcal{F}_0}([-\tau, 0];\R^n)$, and any $ \upsilon, \hat{\upsilon}\in \mathcal{U} $ the following condition is satisfied:
	\begin{align}
		\mathbb{E}[\|\xi_{\zeta,\upsilon}(t)\hspace{-.2em}-\hspace{-.2em}\xi_{\hat{\zeta},\hat{\upsilon}}(t)\|^q]\leq\hspace{-.2em}\beta(\mathbb{E}[\|\zeta-\hat{\zeta}\|_{[-\tau, 0]}^q],t)\hspace{-.2em}+\gamma(\|\upsilon-\hat{\upsilon}\|_\infty).
		\label{nnn}
	\end{align} 
\end{definition}
One can readily verify that in the absence of delay, Definition \ref{definition3} reduces to that of $ \delta$-ISS-M$_q $ for stochastic control systems in \cite[Definition 3.1]{zamani2014symbolic}.  

For later use, we provide the infinitesimal generators (denoted by operator $\mathcal{L}$) for an RJDS $ \Sigma_R $ and a DJDS $ \Sigma_D $ using It\^{o}'s differentiation \cite[equation (23)]{julius1}. Let function $ V: \R^n \times \R^n\rightarrow \R_0^+ $ be twice differentiable on $ \R^n \times \R^n \setminus \bigtriangleup $. 
The infinitesimal generator of $ V $ associated with an RJDS $\Sigma_R$ in (\ref{stocha_delay}) is an operator, denoted by $ \mathcal{L}V $, from $ \mathcal{C}([-\tau,0];\R^n)\times \mathcal{C}([-\tau,0];\R^n) $ to $ \R $, and $ \forall t\in\R^+_0, \forall \x_t,\h\x_t\in\mathcal{C}([-\tau,0];\R^n) $ and $\forall u,\hat{u}\in\mathsf{U}$ it is given by
\begin{align}
	\mathcal{L}V(\x_t,\hat{\x}_t,u,\hat u):=&\begin{bmatrix}\partial_x V & \partial_{\hat{x}} V\end{bmatrix}\begin{bmatrix}
		f(\x_t,u)\\ 
		f(\hat{\x}_t,\hat{u})
	\end{bmatrix}
	+\hspace{-.2em} \frac{1}{2}\mathsf{Tr}\bigg(\hspace{-.3em}\begin{bmatrix}
		g(\x_t)\\ 
		g(\hat{\x}_t)
	\end{bmatrix}\hspace{-.3em} \begin{bmatrix}
	g^T(\x_t) &\hspace{-0.8em} g^T(\hat{\x}_t)
\end{bmatrix}\hspace{-.3em}\begin{bmatrix}
\partial_{x,x}V & \hspace{-.5em}\partial_{x,\h x}V \\ 
\partial_{\h x,x}V & \hspace{-.5em}\partial_{\hat{x},\hat{x}}V
\end{bmatrix}\hspace{-.3em}\bigg)\nonumber\\
&+\sum_{i=1}^{\tilde{r}}\lambda_i\big(V(\x_t(0)+r(\x_t)e_i,\hat{\x_t}(0)+r(\hat{\x}_t)e_i)-V(\x_t(0),\hat{\x_t}(0))\big).
\label{infi_gen}
\end{align}
The infinitesimal generator of $ V $  associated with a DJDS $\Sigma_D$ in (\ref{stocha_delay1}) is an operator, denoted by $ \mathcal{L}V $, from $ \R^{8n}$ to $ \R $ and $ \forall x, \h x, y,\h y,z, \h z, p,\h p\in\R^n $, and $\forall u,\hat{u}\in\mathsf{U}$ it is given by
\begin{align}
	\mathcal{L}V(x,\hat{x}, y, \hat{y}, z, \hat{z},p, \hat{p}, u, \hat u):=&\begin{bmatrix}\partial_x V & \partial_{\hat{x}} V\end{bmatrix}\begin{bmatrix}
		F(x,y,u)\\ 
		F(\hat{x},\hat{y},\hat{u})
	\end{bmatrix}+\hspace{-.2em} \frac{1}{2}\mathsf{Tr}\bigg(\hspace{-.3em}\begin{bmatrix}
		G(x,z)\\ 
		G(\hat{x},\hat{z})
	\end{bmatrix}\hspace{-.3em} \begin{bmatrix}
	G^T(x,z) &\hspace{-0.8em} G^T(\hat{x},\hat{z})
\end{bmatrix}\hspace{-.3em}\begin{bmatrix}
\partial_{x,x}V & \hspace{-.5em}\partial_{x,\h x}V \\ 
\partial_{\h x,x}V & \hspace{-.5em}\partial_{\hat{x},\hat{x}}V
\end{bmatrix}\hspace{-.3em}\bigg)\nonumber\\
&+\sum_{i=1}^{\tilde{r}}\lambda_i\big(V(x+R(x,p)e_i,\hat{x}+R(\hat{x},\hat{p})e_i)-V(x,\hat{x})\big).
\label{infi_gen1}
\end{align}
The symbols $ \partial_{x} $ and $ \partial_{x,\hat{x}} $ in (\ref{infi_gen}) and (\ref{infi_gen1}) represent first and second-order partial derivatives with respect to $ x $ (1st argument) and $ \hat{x} $ (2nd argument), respectively. Note that we dropped the arguments of  $ \partial_{x}V $, $ \partial_{\hat{x}}V $, $ \partial_{x,x}V $, $ \partial_{\hat{x},x}V $, $ \partial_{x,\hat{x}}V $, and  $ \partial_{\hat{x},\hat{x}}V $  in (\ref{infi_gen}) and (\ref{infi_gen1}) for the sake of simplicity.\\
Now we describe $ \delta$-ISS-M$_q $ in terms of existence of so-called $ \delta$-ISS-M$_q $ Lyapunov functions for RJDS and DJDS using Razumikhin-type condition as defined next.

\begin{definition}\label{definition4}
	Consider an RJDS $\textstyle \Sigma_R$ and a continuous function $ V : \R^n \times \R^n\rightarrow\R^+_0 $ that is twice differentiable on $  {\R^n \times \R^n} \setminus \bigtriangleup $. The function $ V $ is called a $ \delta$-ISS-M$_q $ Lyapunov function for $\textstyle \Sigma_R $ for $ q\geq1 $, if there exist $ \mathcal{K}_\infty $ functions  $ \underline{\alpha}, \overline{\alpha}$, and $ \varphi $, such that:
	\begin{enumerate}
		\item[(i)] $ \underline{\alpha} $(resp. $ \overline{\alpha} $) is a convex (resp. concave) function;
		\item[(ii)] $  \forall x, \hat{x}\in\R^n$, $\underline{\alpha}(\|x-\hat{x}\|^q) \leq V(x, \hat{x})\leq\overline{\alpha}(\|x-\hat{x}\|^q) $; 
		\item[(iii)] $ \forall u,\h u\in\mathsf{U} $ and $  \forall t\geq0$, 
		\begin{align}\label{con1}
		 \EE[\mathcal{L}V(\phi,\h \phi,u, \hat u)]\leq-\EE[\omega(\phi(0),\h \phi(0))]+\varphi(\|u-\hat{u}\|),
		\end{align}
		  for all $ \phi,\hat{\phi}\in \mathbf{L}_{\mathcal F_t}^q([-\tau,0];\R^n) $ satisfying 
		 \begin{align}\label{con2}
		 \EE[V(\phi(\theta),\hat{\phi}(\theta))]\leq\EE[\tilde q(\phi(0),\hat{\phi}(0))], \ \forall \theta\in[-\tau,0];
		 \end{align}
	\end{enumerate}
	where $\omega: \R^{n}\times\R^{n}\rightarrow\R^{+}$ is a nonnegative function such that there exists a $\mathcal{K}_\infty$ function $ \tilde{\omega} $ satisfying $\omega(x,\h x)\geq\tilde{\omega}(\|x-\h x\|^q)$ and $\lim_{\|s\|\rightarrow\infty}\frac{\tilde{\omega}(\|s\|^q)}{\overline{\alpha}(\|s\|^{q})}>0$; $\tilde q:\R^{n}\times\R^{n}\rightarrow\R$ is a function such that $\tilde q(x,\hat{x})-V(x,\hat{x})\geq\overline{q}(\|x-\hat{x}\|)$, where $\overline{q}$ is a $\mathcal{K}_\infty$ function satisfying $\lim_{\|s\|\rightarrow\infty}\frac{\overline{q}(\|s\|)}{\overline{\alpha}(\|s\|^{q})}>0$.
\end{definition}

\begin{definition}\label{definition5}
	Consider a DJDS $\textstyle \Sigma_D$ and a continuous function $ V :\R^n\times\R^n\ra\R^+_0$ that is twice differentiable on $ \R^n\times\R^n\setminus \bigtriangleup $. Function $V$ is called a $ \delta$-ISS-M$_q $ Lyapunov function for $\textstyle \Sigma_D $ for $ q\geq1 $, if there exist constants $\kappa_0$, $\kappa_1$, $\kappa_2$, $\kappa_3$ such that $ \kappa_0\geq \sum_{i=1}^{3}\kappa_i\geq 0 $, a nonnegative function $\psi:\R^n\times\R^n\rightarrow\R^+$, $ \mathcal{K}_\infty $ functions $ \ul\alpha, \ol\alpha, \varphi $, and $ \mathcal{K} $ function $ \hat{\omega} $ such that: conditions (i) and (ii) in Definition \ref{definition4} hold and $\forall x, \hat{x},y,\h y,z,\h z,p, \h p\in\R^n$ and $\forall u, \hat{u}\in\mathsf{U}$,
	\begin{align}
		\mathcal{L}V(x, \hat{x},y, \h y, z,\h z,p,\h p,u,\h u)\leq&-\kappa_0V(x,\hat{x})-\psi(x,\hat{x})+\kappa_1V(y,\hat{y})+\kappa_2V(z,\hat{z})+\kappa_3V(p,\hat{p})+\varphi(\|u-\hat{u}\|),\nonumber
	\end{align}
and $ \psi(x,\hat{x})\geq\hat{\omega}(\|x-\hat{x}\|^q) $ and $ \lim_{\|s\|\rightarrow\infty}\frac{\hat{\omega}(\|s\|^q)}{\overline{\alpha}(\|s\|^q)}$ $> 0$.
\end{definition}
Now we provide the description of $ \delta$-ISS-M$_q $ for an RJDS $ \Sigma_R $ in terms of existence of $ \delta$-ISS-M$_q $ Lyapunov functions in the following theorem.
\begin{theorem}\label{theorem1}
	An RJDS $\textstyle \Sigma_R $ is $ \delta$-ISS-M$_q $ if it admits a $ \delta$-ISS-M$_q $ Lyapunov function as in Definition \ref{definition4}.
\end{theorem}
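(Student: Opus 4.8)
The plan is to derive the $\delta$-ISS-M$_k$ estimate \eqref{nnn} from a stochastic Razumikhin argument applied to the expected Lyapunov value along two solution processes. Fix $t\in\R_0^+$, initial conditions $\zeta,\hat\zeta$, inputs $\upsilon,\hat\upsilon$, abbreviate $\xi(t):=\xi_{\zeta,\upsilon}(t)$ and $\hat\xi(t):=\xi_{\hat\zeta,\hat\upsilon}(t)$, and introduce the scalar $W(t):=\mathbb{E}[V(\xi(t),\hat\xi(t))]$. On the initial window $t\in[-\tau,0]$ one has $W(t)=\mathbb{E}[V(\zeta(t),\hat\zeta(t))]$, and using condition (ii), the pointwise bound $\|\zeta(\theta)-\hat\zeta(\theta)\|\le\|\zeta-\hat\zeta\|_{[-\tau,0]}$, and Jensen's inequality for the concave function $\overline\alpha$, I would obtain $\sup_{-\tau\le\theta\le0}W(\theta)\le\overline\alpha(\mathbb{E}[\|\zeta-\hat\zeta\|_{[-\tau,0]}^k])=:W_0$. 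For $t\ge0$, Dynkin's formula with the infinitesimal generator \eqref{infi_gen} (including the jump term) gives $\tfrac{\diff}{\diff t}W(t)=\mathbb{E}[\mathcal{L}V(\xi_t,\hat\xi_t)]$ evaluated at inputs $\upsilon(t),\hat\upsilon(t)$, which is precisely the quantity controlled by condition (iii).

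The two asymptotic-ratio conditions in Definition \ref{definition4} are what turn the infinitesimal inequality into a usable decay rate and Razumikhin margin. From $\lim_{\|s\|\to\infty}\tilde\omega(\|s\|^k)/\overline\alpha(\|s\|^k)>0$ together with $\omega(x,\hat x)\ge\tilde\omega(\|x-\hat x\|^k)$ and $V\le\overline\alpha$ from (ii), I would extract a constant $\lambda>0$ and an offset $c_\omega\ge0$ with $\omega(x,\hat x)\ge\lambda V(x,\hat x)-c_\omega$; taking expectations in (iii), on the Razumikhin set this yields $\dot W(t)\le-\lambda W(t)+c_\omega+\varphi(\|\upsilon-\hat\upsilon\|_\infty)$. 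Similarly, from $\lim_{\|s\|\to\infty}\overline q(\|s\|)/\overline\alpha(\|s\|^k)>0$ and $q-V\ge\overline q$ I would obtain $\rho>1$ with $\mathbb{E}[q(\xi(t),\hat\xi(t))]\ge\rho W(t)$ whenever $W(t)$ exceeds a fixed level. This is exactly the margin demanded by the premise \eqref{con2}: if $W(t+\theta)\le\rho W(t)$ for all $\theta\in[-\tau,0]$, then \eqref{con2} holds and the decay estimate becomes available precisely when the present value dominates its recent history by the factor $\rho$.

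With these ingredients I would run the stochastic Razumikhin contradiction argument, adapted to the incremental and input-affected setting. Writing $d:=(c_\omega+\varphi(\|\upsilon-\hat\upsilon\|_\infty))/\lambda$ for the residual level toward which $W$ is driven, one shows that whenever $W$ stays above $\rho d$ the premise $W(s+\theta)\le\rho W(s)$ is in force, the estimate $\dot W\le-\lambda W+\lambda d$ applies, and hence $W$ cannot exceed $\rho\sup_{-\tau\le\theta\le0}W(\theta)$ and must decrease toward $d$. Packaging this monotone/exponential behaviour into a single bound produces a $\mathcal{KL}$ function $\hat\beta$ and a $\mathcal{K}_\infty$ function $\hat\gamma$ with $W(t)\le\hat\beta(W_0,t)+\hat\gamma(\|\upsilon-\hat\upsilon\|_\infty)$ for all $t\ge0$, the input term absorbing $c_\omega$ and $\varphi$.

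Finally, I would convert the bound on $W$ back to the Euclidean error. By (ii) and convexity of $\underline\alpha$, Jensen gives $\underline\alpha(\mathbb{E}[\|\xi(t)-\hat\xi(t)\|^k])\le\mathbb{E}[\underline\alpha(\|\xi(t)-\hat\xi(t)\|^k)]\le W(t)$, so $\mathbb{E}[\|\xi(t)-\hat\xi(t)\|^k]\le\underline\alpha^{-1}(W(t))$. Using $\underline\alpha^{-1}(a+b)\le\underline\alpha^{-1}(2a)+\underline\alpha^{-1}(2b)$ and composing with $W_0\le\overline\alpha(\mathbb{E}[\|\zeta-\hat\zeta\|_{[-\tau,0]}^k])$, I would set $\beta(s,t):=\underline\alpha^{-1}(2\hat\beta(\overline\alpha(s),t))$ and $\gamma(r):=\underline\alpha^{-1}(2\hat\gamma(r))$, which are of class $\mathcal{KL}$ and $\mathcal{K}_\infty$ respectively, yielding \eqref{nnn}. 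The main obstacle is the middle step: making the stochastic, input-driven Razumikhin argument rigorous—justifying the Dynkin/It\^o differentiation for the delayed generator \eqref{infi_gen} with its jump contribution, ensuring the comparison premise holds along the trajectory over the whole delay window $[-\tau,0]$ rather than pointwise, and extracting the offset $c_\omega$ and margin $\rho>1$ from the two asymptotic-ratio conditions so that the decay and the ISS residual assemble into valid $\mathcal{KL}$ and $\mathcal{K}_\infty$ bounds.
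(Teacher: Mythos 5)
Your overall architecture (working with $W(t)=\EE[V(\xi_{\zeta,\upsilon}(t),\xi_{\hat\zeta,\hat\upsilon}(t))]$, Dynkin/It\^{o} differentiation against the generator \eqref{infi_gen}, a Razumikhin contradiction argument, and Jensen conversions via convexity of $\underline{\alpha}$ and concavity of $\overline{\alpha}$) matches the paper's, but the step where you extract the decay rate contains a genuine gap that breaks the ISS conclusion. You replace the asymptotic-ratio hypotheses by the pointwise bound $\omega(x,\hat x)\ge\lambda V(x,\hat x)-c_\omega$ (and similarly $\EE[q]\ge\rho W$ above a \emph{fixed} level). The offset $c_\omega$ is unavoidable in your scheme: the ratio conditions in Definition \ref{definition4} constrain $\tilde{\omega}/\overline{\alpha}$ and $\overline{q}/\overline{\alpha}$ only as $\|s\|\to\infty$, so the ratio may vanish near the diagonal (e.g.\ $\tilde\omega(r)=r^2$, $\overline\alpha(r)=r$), and no $\lambda>0$ works with $c_\omega=0$. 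But with a fixed $c_\omega>0$, your differential inequality $\dot W\le-\lambda W+c_\omega+\varphi(\|\upsilon-\hat\upsilon\|_\infty)$ only drives $W$ toward the residual level $(c_\omega+\varphi(\|\upsilon-\hat\upsilon\|_\infty))/\lambda$, which stays bounded away from zero even when $\upsilon\equiv\hat\upsilon$. Consequently the bound you end with, $W(t)\le\hat\beta(W_0,t)+\hat\gamma(\|\upsilon-\hat\upsilon\|_\infty)$, cannot hold with $\hat\gamma\in\mathcal{K}_\infty$: a strictly positive constant cannot be ``absorbed'' into a function vanishing at $0$. What your argument actually yields is a practical (incremental) stability property, not inequality \eqref{nnn}.

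The point your shortcut misses --- and the reason the paper leans on Lemmas 3.2 and 3.3 of \cite{huang2009input} --- is that one needs level-dependent extractions \emph{at the level of expectations}: there exist a $\mathcal{K}_\infty$ function $\mu_\omega$ and a constant $a_q>0$ such that $\EE[\omega]\ge 2\overline{\varphi}$ and $\EE[q]-\EE[V]\ge a_q$ whenever $\EE[V]\ge\mu_\omega^{-1}(2\overline{\varphi})$, where $\overline{\varphi}=\varphi(\|\upsilon-\hat\upsilon\|_\infty)$. The crucial feature is that this threshold $\mu_\omega^{-1}(2\overline{\varphi})$ scales with the input mismatch and vanishes as $\overline{\varphi}\to 0$; the paper then runs a finite staircase-descent contradiction argument (descending by the additive margin $a_q$ over windows of length $\hat\tau$) to push $\EE[V]$ below $\mu_\omega^{-1}(2\overline{\varphi})$, which produces the gain $\gamma(s)=\underline{\alpha}^{-1}(\mu_\omega^{-1}(2\varphi(s)))$, a genuine $\mathcal{K}_\infty$ function. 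Passing from the pointwise hypotheses to such statements about expectations is nontrivial (a lower bound on $\EE[V]$ gives no pointwise lower bound on $V$, so your pointwise reasoning does not transfer), and it is exactly the content of the cited lemmas; your proposal needs either to import them or to reprove them, and the linear-minus-constant surrogate cannot be repaired into a proof of \eqref{nnn}.
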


\begin{proof}
	The proof is inspired by the proof of Theorem 3.1 in \cite{huang2009input}. Denote $ \overline{\varphi}=\varphi(\|\upsilon-\hat{\upsilon}\|_\infty) $ and $ \overline{V}_0=\underline{\alpha}(\EE[\|\zeta-\hat{\zeta}\|_{[-\tau,0]}^q])$, $ \forall \upsilon,\h\upsilon\in\mathcal{U} $ and $\forall\zeta, \hat{\zeta} \in \mathcal{C}^b_{\mathcal{F}_0}([-\tau, 0];\R^n)$. By using Lemma 3.2 and 3.3 in \cite{huang2009input}, there exist a constant $a_q>0 $ and a $ \mathcal{K}_\infty $ function $ \mu_\omega $ such that $ \forall t\geq0 $, $ \EE[\omega(\xi_{\zeta,\upsilon}(t),\xi_{\hat{\zeta},\hat{\upsilon}}(t))]\geq 2\overline{\varphi}$ and $ \EE[ \tilde q(\xi_{\zeta,\upsilon}(t),\xi_{\hat{\zeta},\hat{\upsilon}}(t))]-\EE [V(\xi_{\zeta,\upsilon}(t),\xi_{\hat{\zeta},\hat{\upsilon}}(t))]\geq a_q$, whenever $ \EE [V(\xi_{\zeta,\upsilon}(t),\xi_{\hat{\zeta},\hat{\upsilon}}(t))]\geq\mu_\omega^{-1}(2\overline{\varphi}) $. Without loss of generality, assume $ \mu_\omega^{-1}(2\overline{\varphi})<\underline{\alpha}(\sup_{-\tau\leq\theta\leq0}\EE[\|\zeta(\theta)-\hat{\zeta}(\theta)\|^q)]\leq \overline{V}_0$. Let $ J $ be the minimal nonnegative integer such that $ M_0=\mu_\omega^{-1}(2\overline{\varphi})+Ja_q>\overline{V}_0 $. Let $ \hat{\tau}=\max\{\tau,M_0/\overline{\varphi}\} $ and $ t_j=j\hat{\tau} $ for $ j\in\{0,1,\ldots ,J\} $. In order to prove the theorem, we need to show
	\begin{align}
		\EE[ V(\xi_{\zeta,\upsilon}(t),\xi_{\hat{\zeta},\hat{\upsilon}}(t))]\leq\min\{\overline{V}_0,M_j\},   \forall t\geq t_j,
		\label{p0}
	\end{align}
	where $ M_j=\mu_\omega^{-1}(2\overline{\varphi})+(J-j)a_q$ and $ j\in\{0,1,\ldots ,J\} $.
	First we show that $\EE [V(\xi_{\zeta,\upsilon}(t),\xi_{\hat{\zeta},\hat{\upsilon}}(t))]\leq\overline{V}_0, \forall t\geq t_0$.\\ 
	Suppose that $ t_a:=\inf\{t>t_0\st\EE [V(\xi_{\zeta,\upsilon}(t),\xi_{\hat{\zeta},\hat{\upsilon}}(t))]>\overline{V}_0\}<\infty $. Since $ \EE [V(\xi_{\zeta,\upsilon}(t),\xi_{\hat{\zeta},\hat{\upsilon}}(t))] $ is continuous in time $ t\geq 0 $, there exist a pair of constants $ t_b $ and $ t_c $ such that $ t_0\leq t_b\leq t_a<t_c $ and 
	\begin{align}
		\begin{matrix}
			\EE [V(\xi_{\zeta,\upsilon}(t),\xi_{\hat{\zeta},\hat{\upsilon}}(t))]=\overline{V_0}, &t=t_b; \\ 
			\overline{V_0}< \EE[V(\xi_{\zeta,\upsilon}(t),\xi_{\hat{\zeta},\hat{\upsilon}}(t))] <\overline{V_0}+a_q, & t_b < t \leq t_c.
		\end{matrix}
		\label{p1}
	\end{align}
	However, by generalized It\^{o}'s formula \cite{skorokhod2009asymptotic} and condition (\ref{con1}) in Definition \ref{definition4}, we have
	\begin{align}
		&\EE [V(\xi_{\zeta,\upsilon}(t),\xi_{\hat{\zeta},\hat{\upsilon}}(t))]=\EE [V(\xi_{\zeta,\upsilon}(t_b),\xi_{\hat{\zeta},\hat{\upsilon}}(t_b))]+\int_{t_b}^{t}\EE[\mathcal{L}V(\xi_{s,\zeta,\upsilon},\xi_{s,\h \zeta,\h \upsilon})]\diff s\leq \overline{V_0}- \overline{\varphi}(t-t_b)  \leq \overline{V_0}\nonumber
	\end{align}
	fot all $ t\in(t_b,t_c] $, which contradicts (\ref{p1}). Thus the inequality $\EE [V(\xi_{\zeta,\upsilon}(t),\xi_{\hat{\zeta},\hat{\upsilon}}(t))]\leq\overline{V}_0$ must be true for all $ t\geq t_0$.
	Now we show that $\EE [V(\xi_{\zeta,\upsilon}(t),\xi_{\hat{\zeta},\hat{\upsilon}}(t))]\leq M_1, \forall t\geq t_1$. Let $ t_m:= \inf\{t\geq t_0\st \EE [V(\xi_{\zeta,\upsilon}(t),\xi_{\hat{\zeta},\hat{\upsilon}}(t))]\leq M_1\} <\infty$. If $ t_m >t_1 $, then $ \forall t\in [t_0, t_1] $, we have 
	\begin{align}
		\EE[ q(\xi_{\zeta,\upsilon}(t),\xi_{\hat{\zeta},\hat{\upsilon}}(t))]\hspace{-.2em}&\geq\hspace{-.2em}\EE [V(\xi_{\zeta,\upsilon}(t),\xi_{\hat{\zeta},\hat{\upsilon}}(t))]\hspace{-.2em}+\hspace{-.2em}a_q\hspace{-.2em}>\hspace{-.2em}M_1\hspace{-.2em}+\hspace{-.2em}a_q\hspace{-.2em}>\hspace{-.2em}\overline{V}_0\geq\hspace{-.2em} \EE[ V(\xi_{\zeta,\upsilon}(t+\theta),\xi_{\h\zeta,\h\upsilon}(t+\theta))],\forall \theta\in[-\tau,0].\label{aaa}
	\end{align}  
	Using condition (\ref{con1}) in Definition \ref{definition4}, inequality (\ref{aaa}) implies 
	\[ \EE [\mathcal{L}V(\xi_{t,\zeta,\upsilon}, \xi_{t,\hat{\zeta},\hat{\upsilon}})]\leq -\overline{\varphi} , \ \forall t\in[t_0,t_1].\]
	Consequently, by generalized It\^{o}'s formula, we have $ \EE [V(\xi_{\zeta,\upsilon}(t)$,$\xi_{\hat{\zeta},\hat{\upsilon}}(t))]\leq \overline{V}_0- \overline{\varphi} \hat{\tau}<0$, which contradicts the property of $ \EE [V(\xi_{\zeta,\upsilon}(t)$, $\xi_{\hat{\zeta},\hat{\upsilon}}(t))]\geq 0, \ \forall t\geq 0 $. Hence, we must have $ t_m\leq t_1 $. Let
	\[\bar{t}_a:=\inf\{t>t_m\st\EE[ V(\xi_{\zeta,\upsilon}(t),\xi_{\hat{\zeta},\hat{\upsilon}}(t))]>M_1\} <\infty.\]
	Again as $ \EE[ V(\xi_{\zeta,\upsilon}(t),\xi_{\hat{\zeta},\hat{\upsilon}}(t))] $ is continuous in $ t\geq0 $, there exists constants $ \bar{t}_b $ and $ \bar{t}_c $ such that $ t_1\leq \bar{t}_b\leq \bar{t}_a< \bar{t}_c $ and 
	\begin{align}
		\begin{matrix}
			\EE[ V(\xi_{\zeta,\upsilon}(t),\xi_{\hat{\zeta},\hat{\upsilon}}(t))]=M_1, &t=\bar{t}_b; \\ 
			M_1< \EE [V(\xi_{\zeta,\upsilon}(t),\xi_{\hat{\zeta},\hat{\upsilon}}(t))] <M_1+a_q, & \bar{t}_b < t \leq \bar{t}_c.
		\end{matrix}\nonumber
	\end{align}
	By using similar reasoning as before, generalized It\^{o}'s formula \cite{skorokhod2009asymptotic} and condition (\ref{con1}) in Definition \ref{definition4}, the assumption results in contradiction, thus we have (\ref{p0}) for $ j=1 $. Now define $ t_j:= \inf\{t\geq t_{j-1}$  $ | $  $\EE [V(\xi_{\zeta,\upsilon}(t),\xi_{\hat{\zeta},\hat{\upsilon}}(t))]\leq M_j\}<\infty $ for $ j=2,3,\ldots ,J $. By similar type of reasoning, we get $ \EE [V(\xi_{\zeta,\upsilon}(t),\xi_{\hat{\zeta},\hat{\upsilon}}(t))]\leq M_j$, $\forall t\geq t_j $. Particularly, $ \EE [V(\xi_{\zeta,\upsilon}(t),\xi_{\hat{\zeta},\hat{\upsilon}}(t))]\leq M_J= \mu_\omega^{-1}(2\overline{\varphi}), \forall t\geq t_J $. By following Jensen's inequality, one obtains
	\begin{align}
		\EE[\|\xi_{\zeta,\upsilon}(t)-\xi_{\hat{\zeta},\hat{\upsilon}}(t)\|^q] \leq\gamma(\|\upsilon-\hat{\upsilon}\|_\infty), \ \forall t\geq t_J,
		\label{p3} 
	\end{align}
	where $ \gamma(s)=\underline{\alpha}^{-1}(\mu_\omega^{-1}(2\varphi(s))) $ for all $ s \in \R_0^+ $. 
	Now choose a $ \mathcal{KL} $ function $ \overline{\beta} $ such that $ \overline{\beta}(\overline{V}_0,t)\geq 2 \overline{V}_0-\frac{\overline{V}_0}{t_J}t $, $ \forall t\in[0,t_J] $. So we have $ \EE 
	[V(\xi_{\zeta,\upsilon}(t),\xi_{\hat{\zeta},\hat{\upsilon}}(t))] \leq \overline{\beta}(\overline{V}_0,t),\forall t\in [0,t_J] $ which implies
	\begin{align}
		\EE[\|\xi_{\zeta,\upsilon}(t)-\xi_{\hat{\zeta},\hat{\upsilon}}(t)\|^q] \leq\beta(\mathbb{E}[\|\zeta-\hat{\zeta}\|_{[-\tau, 0]}^q],t), \ \forall t\in [0,t_J],
		\label{p4} 
	\end{align}
	where $ \beta(s,t)=\underline{\alpha}^{-1}(\overline{\beta}(\underline{\alpha}(s),t)) $ for any $ s,t\in\R^+_0 $. 
	From (\ref{p3}) and (\ref{p4}), one can readily verify inequality (\ref{nnn}) which implies that $\textstyle \Sigma_R $ is $ \delta$-ISS-M$_q $.
\end{proof}
The next corollary proposes similar results as in the previous theorem but for DJDS. 
\begin{corollary}\label{col1}
	A DJDS $\textstyle \Sigma_D $ is $ \delta$-ISS-M$_q $ if it admits a $ \delta$-ISS-M$_q $ Lyapunov function as in Definition \ref{definition5}.\\
\end{corollary}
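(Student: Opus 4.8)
The plan is to exploit the fact that a DJDS $\Sigma_D$ is a special case of an RJDS $\Sigma_R$ and then reduce the claim to Theorem~\ref{theorem1}. Concretely, setting $\tau=\max\{\tau_1,\tau_2,\tau_3\}$ and identifying the functional drift, diffusion and reset maps via $f(\x_t,u)=F(\x_t(0),\x_t(-\tau_1),u)$, $g(\x_t)=G(\x_t(0),\x_t(-\tau_2))$ and $r(\x_t)=R(\x_t(0),\x_t(-\tau_3))$, the process \eqref{stocha_delay1} is exactly a solution process of \eqref{stocha_delay}, and the Lipschitz hypotheses on $F,G,R$ transfer to $f,g,r$. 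Under this identification, evaluating the RJDS generator \eqref{infi_gen} on $(\x_t,\hat{\x}_t)$ with $x=\x_t(0),\,\hat x=\hat{\x}_t(0),\,y=\x_t(-\tau_1),\,\ldots,\,\hat p=\hat{\x}_t(-\tau_3)$ reproduces the DJDS generator \eqref{infi_gen1} verbatim. Hence it suffices to show that a function $V$ satisfying Definition~\ref{definition5} is a $\delta$-ISS-M$_k$ Lyapunov function in the sense of Definition~\ref{definition4} for the associated $\Sigma_R$; the corollary then follows from Theorem~\ref{theorem1}.

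Conditions (i) and (ii) of Definition~\ref{definition4} are assumed verbatim in Definition~\ref{definition5}, so only condition (iii) requires work. I would fix arbitrary $\phi,\hat\phi\in\mathbf{L}^k_{\mathcal F_t}([-\tau,0];\R^n)$ satisfying the Razumikhin hypothesis \eqref{con2} with a function $q$ to be chosen, and take expectations in the bound of Definition~\ref{definition5} evaluated at $x=\phi(0),\,y=\phi(-\tau_1),\,z=\phi(-\tau_2),\,p=\phi(-\tau_3)$ and the hatted analogues. Writing $\kappa:=\kappa_1+\kappa_2+\kappa_3\le\kappa_0$, this yields
\begin{align}
\EE[\mathcal{L}V(\phi,\hat\phi)] &\le -\kappa_0\,\EE[V(\phi(0),\hat\phi(0))]-\EE[\psi(\phi(0),\hat\phi(0))]\nonumber\\
&\quad +\sum_{i=1}^{3}\kappa_i\,\EE[V(\phi(-\tau_i),\hat\phi(-\tau_i))]+\varphi(\|u-\hat u\|).\nonumber
\end{align}
Since each $-\tau_i\in[-\tau,0]$, the Razumikhin hypothesis \eqref{con2} bounds every delayed term by $\EE[q(\phi(0),\hat\phi(0))]$, so the delayed contribution is at most $\kappa\,\EE[q(\phi(0),\hat\phi(0))]$.

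The decisive choice is $q(x,\hat x):=V(x,\hat x)+\tfrac{1}{2\kappa}\hat\omega(\|x-\hat x\|^k)$ (for $\kappa>0$). Substituting and using $\hat\omega(\|x-\hat x\|^k)\le\psi(x,\hat x)$ gives $\kappa\,\EE[q(\phi(0),\hat\phi(0))]\le\kappa\,\EE[V(\phi(0),\hat\phi(0))]+\tfrac12\EE[\psi(\phi(0),\hat\phi(0))]$, whence, using $\kappa_0\ge\kappa$ and $V\ge0$,
\begin{align}
\EE[\mathcal{L}V(\phi,\hat\phi)]&\le -(\kappa_0-\kappa)\,\EE[V(\phi(0),\hat\phi(0))]-\tfrac12\EE[\psi(\phi(0),\hat\phi(0))]+\varphi(\|u-\hat u\|)\nonumber\\
&\le -\tfrac12\EE[\psi(\phi(0),\hat\phi(0))]+\varphi(\|u-\hat u\|).\nonumber
\end{align}
This is exactly \eqref{con1} with $\omega:=\tfrac12\psi$. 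It then remains to check the side conditions: setting $\tilde\omega(s):=\tfrac12\hat\omega(s)$ and $\overline q(s):=\tfrac{1}{2\kappa}\hat\omega(s^k)$, the bounds $\omega(x,\hat x)\ge\tilde\omega(\|x-\hat x\|^k)$ and $q(x,\hat x)-V(x,\hat x)\ge\overline q(\|x-\hat x\|)$ hold by construction, and the limit hypotheses transfer (up to the positive factors $\tfrac12$ and $\tfrac{1}{2\kappa}$) to $\tilde\omega$ and $\overline q$.

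Finally, the degenerate case $\kappa=0$ is easier: the generator bound already reads $\EE[\mathcal{L}V(\phi,\hat\phi)]\le-\EE[\psi(\phi(0),\hat\phi(0))]+\varphi(\|u-\hat u\|)$ without invoking \eqref{con2}, so one takes $\omega:=\psi$ and any admissible $q$ (e.g.\ $q:=V+\hat\omega(\|\cdot\|^k)$). The main obstacle I anticipate is not the algebra but the bookkeeping of the growth/limit conditions: one must verify that splitting $\psi$ between the dissipation term $\omega$ and the Razumikhin margin $q-V$ preserves both limit conditions of Definition~\ref{definition4}, and in particular that $\hat\omega$ — a priori only class $\mathcal{K}$ — is forced to be class $\mathcal{K}_\infty$ by the hypothesis $\lim_{\|s\|\to\infty}\hat\omega(\|s\|^k)/\overline\alpha(\|s\|^k)>0$ together with $\overline\alpha\in\mathcal{K}_\infty$, so that $\tilde\omega$ and $\overline q$ qualify as required. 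Once condition (iii) is established, Theorem~\ref{theorem1} applied to the associated RJDS delivers the $\delta$-ISS-M$_k$ property of $\Sigma_D$.
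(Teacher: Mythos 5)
Your proof is correct and takes essentially the same route as the paper's: both verify condition (iii) of Definition \ref{definition4} for the DJDS generator evaluated at the delayed arguments and then invoke Theorem \ref{theorem1}, with the key step being a Razumikhin function of the form $q=V+(\text{margin})$ whose excess over $V$ is absorbed using $-\psi$ and $\kappa_0\geq\sum_{i=1}^{3}\kappa_i$. The only difference is bookkeeping in the choice of constants: the paper sets $\omega=\frac{1}{1+\kappa_0}\psi$ and $q=V+\omega$, which avoids your separate case for $\kappa_1+\kappa_2+\kappa_3=0$, whereas you split the budget as $\omega=\tfrac{1}{2}\psi$ and $q=V+\tfrac{1}{2\kappa}\hat\omega(\|\cdot\|^k)$; both satisfy the required growth conditions.
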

\begin{proof}
	Let $ \omega(x,\hat{x})=\frac{1}{1+\kappa_0}\psi(x,\hat{x}) $ for all $ x,\h x\in\R^n $. Now by considering Definition \ref{definition5}, we have
	\begin{align}
		\EE[\mathcal{L}V(\phi,\h \phi,u,\h u)]\hspace{-.2em}=&\EE[\mathcal{L}V(\phi(0),\h\phi(0),\phi(-\tau_1),\h\phi(-\tau_1),\phi(-\tau_2),\h\phi(-\tau_2),\phi(-\tau_3),\h\phi(-\tau_3))]\nonumber\\
		\leq&-\kappa_0\EE [V(\phi(0),\h\phi(0))]-\EE[\psi(\phi(0),\h \phi(0))]
		+\kappa_1\EE[V(\phi(-\tau_1),\h \phi(-\tau_1))]+\kappa_2\EE[V(\phi(-\tau_2),\h \phi(-\tau_2))]\nonumber\\&+\kappa_3\EE[V(\phi(-\tau_3),\h \phi(-\tau_3))]+\varphi(\|u-\hat{u}\|)\nonumber\\
		\leq&-\kappa_0\big(\EE[ V(\phi(0),\h \phi(0))]+ \EE[\omega(\phi(0),\h \phi(0))]\big)+\kappa_1\EE[V(\phi(-\tau_1),\h \phi(-\tau_1))]+\kappa_2\EE[V(\phi(-\tau_2),\h \phi(-\tau_2))]\nonumber\\&+\kappa_3\EE[V(\phi(-\tau_3),\h \phi(-\tau_3))]-\EE[\omega(\phi(0),\h \phi(0))]+\varphi(\|u-\hat{u}\|)\nonumber\\
		\leq&-(\kappa_0-\sum_{i=1}^{3}\kappa_i)\Big(\EE [V(\phi(0),\h \phi(0))]+ \EE[\omega(\phi(0),\h \phi(0))]\Big)-\EE[\omega(\phi(0),\h \phi(0))]+\varphi(\|u-\hat{u}\|)\nonumber\\
		\leq& -\EE[\omega(\phi(0),\h \phi(0))]+\varphi(\|u-\hat{u}\|)\nonumber
	\end{align}	
	  for all $ t\geq0 $ and $ \phi,\h \phi\in\mathbf{L}_{\mathcal F_t}^q([-\tau,0];\R^n) $ satisfying condition (\ref{con2}) in Definition \ref{definition4} with function $ \tilde{q}(\phi(0),\hat{\phi}(0)):=V(\phi(0),\hat{\phi}(0))+\omega(\phi(0),\hat{\phi}(0)) $. Moreover, functions $ \tilde{\omega}(s)=\overline{q}(s)=\frac{\hat{\omega}(s)}{1+\kappa_0} $, $\forall s\in\R_0^+$, satisfy properties required in condition (iii) in Definition \ref{definition4}. Therefore, $ V $ satisfies all the conditions in Definition \ref{definition4}. Thus by following Theorem \ref{theorem1}, we obtain that $\textstyle \Sigma_D $ is $ \delta$-ISS-M$_q $.
\end{proof}
In the following lemma, we provide a similar result as in Corollary \ref{col1} but tailored to linear delayed jump-diffusion systems in which sufficient conditions boil down to a matrix inequality.  
\begin{lemma}\label{lem3}
	Consider a DJDS $ \Sigma_D $ as given in (\ref{stocha_delay1}), where for all $ x, y, z,p \in\R^n $ and $ u\in\mathsf{U}$, $ F(x,y,u):=A_1x+A_2y+Bu $, for some $ A_1,A_2\in\R^{n\times n} $ and $  B\in\R^{n\times m} $, $ G(x,z):=[G_1x $ $ G_2x $ $ \cdots $ $ G_{\ol r}x] +[\ol G_1z $ $ \ol G_2z $ $ \cdots $ $ \ol G_{\ol r}z]$ and $R(x,p):=[R_1x $ $ R_2x $ $ \cdots $ $ R_{\tilde{r}}x] +[\ol R_1p $ $ \ol R_2p $ $ \cdots $ $ \ol R_{\tilde{r}}p]$, for some $ G_i,\ol G_i, R_i,\ol R_i\in\R^{n\times n} $. Then, system $ \Sigma_D $ is $ \delta$-ISS-M$_2 $ if there exist constants $  c_1, c_2, c_3, c_4, c_5\in\R^+ $ satisfying $ c_1>\sum_{i=2}^{4}c_i $ and
	\begin{align}\label{one}
	\begin{bmatrix}
	\Delta & \hspace{-.3em}PA_2 & \hspace{-.2em}\sum\limits_{i=1}^{\bar{r}}G_i^TP\overline{G}_i & \sum\limits_{i=1}^{\tilde{r}}\lambda_i(P\overline{R}_i\hspace{-.2em}+\hspace{-.2em}R_i^TP\overline{R}_i) & \hspace{-.3em}PB\\ 
	A_2^TP & 0 & 0 & 0 & 0\\ 
	\sum\limits_{i=1}^{\bar{r}}\overline{G}_i^TPG_i& 0 & \sum\limits_{i=1}^{\bar{r}}\overline{G}_i^TP\overline{G}_i & 0 & 0\\ 
	\sum\limits_{i=1}^{\tilde{r}}\lambda_i(\overline{R}_i^TP\hspace{-.2em}+\hspace{-.2em}\overline{R}_i^TPR_i) & 0 & 0 & \sum\limits_{i=1}^{\tilde{r}}\lambda_i\overline{R}_i^TP\overline{R}_i  & 0\\ 
	B^TP & 0 & 0 & 0 & 0
	\end{bmatrix}\preceq\begin{bmatrix}
	-c_1P & 0 & 0 &0  & 0\\ 
	0 & \hspace{-.3em}c_2P & 0 & 0 & 0\\ 
	0 & 0 &\hspace{-.3em} c_3P & 0 &0\\ 
	0 & 0 & 0 &\hspace{-.3em}c_4P  & 0\\ 
	0 & 0 & 0 & 0 &\hspace{-.3em}c_5I_m 
	\end{bmatrix}
	\end{align} 
	where $ P $ is a symmetric positive definite matrix and $\Delta=  PA_1+A_1^TP+\sum\limits_{i=1}^{\bar{r}}G_i^TPG_i+\sum\limits_{i=1}^{\tilde{r}}\lambda_i(PR_i+R_i^TP+R_i^TPR_i)$.
\end{lemma}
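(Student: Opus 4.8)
The plan is to exhibit an explicit quadratic $\delta$-ISS-M$_2$ Lyapunov function meeting Definition \ref{definition5} and then apply Corollary \ref{col1}. Guided by the linearity of the data and the positive definite matrix $P$ appearing in \eqref{one}, the natural candidate is
\begin{align}
V(x,\h x) = (x-\h x)^T P(x-\h x).\nonumber
\end{align}
Conditions (i) and (ii) of Definition \ref{definition4} are then immediate: taking $\ul\alpha(s)=\lambda_{\min}(P)s$ and $\ol\alpha(s)=\lambda_{\max}(P)s$ gives two linear, hence simultaneously convex and concave, $\mathcal{K}_\infty$ functions, and the Rayleigh bound $\lambda_{\min}(P)\|x-\h x\|^2\le V(x,\h x)\le\lambda_{\max}(P)\|x-\h x\|^2$ supplies (ii).

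The core of the argument is to compute the generator $\mathcal{L}V$ from \eqref{infi_gen1} for this $V$ and the linear $F$, $G$, $R$, and to recognize the outcome as a single quadratic form. Writing $\chi:=[\,(x-\h x);(y-\h y);(z-\h z);(p-\h p);(u-\h u)\,]$ for the stacked vector of coordinate differences, I would substitute the gradients $\partial_x V=2(x-\h x)^TP$, $\partial_{\h x}V=-2(x-\h x)^TP$ and the Hessian blocks $\partial_{x,x}V=\partial_{\h x,\h x}V=2P$, $\partial_{x,\h x}V=\partial_{\h x,x}V=-2P$ into the three terms of \eqref{infi_gen1}. The drift term collapses to $(x-\h x)^T(PA_1+A_1^TP)(x-\h x)$ together with the cross terms $2(x-\h x)^TPA_2(y-\h y)$ and $2(x-\h x)^TPB(u-\h u)$. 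In the diffusion term the block Hessian factors through $[\,I;-I\,]$, so the trace reduces to $\mathsf{Tr}\big((\Delta G)(\Delta G)^TP\big)$ with $\Delta G:=G(x,z)-G(\h x,\h z)$; since the $i$-th column of $\Delta G$ equals $G_i(x-\h x)+\ol G_i(z-\h z)$, this becomes $\sum_{i=1}^{\ol r}(G_i(x-\h x)+\ol G_i(z-\h z))^TP(G_i(x-\h x)+\ol G_i(z-\h z))$. In the jump term the $i$-th increment replaces $x$ by $(I+R_i)x+\ol R_i p$, so each summand yields $\lambda_i\big[((I+R_i)(x-\h x)+\ol R_i(p-\h p))^TP((I+R_i)(x-\h x)+\ol R_i(p-\h p))-(x-\h x)^TP(x-\h x)\big]$. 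Collecting all the diagonal $(x-\h x)$-contributions reproduces exactly the block $\Delta=PA_1+A_1^TP+\sum_i G_i^TPG_i+\sum_i\lambda_i(PR_i+R_i^TP+R_i^TPR_i)$ of \eqref{one}, and matching the remaining diagonal and cross terms shows $\mathcal{L}V=\chi^T Q\,\chi$, where $Q$ is precisely the matrix on the left-hand side of \eqref{one}.

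With this identification in hand, the matrix inequality \eqref{one} delivers, for every choice of arguments,
\begin{align}
\mathcal{L}V\le -c_1V(x,\h x)+c_2V(y,\h y)+c_3V(z,\h z)+c_4V(p,\h p)+c_5\|u-\h u\|^2,\nonumber
\end{align}
because each diagonal block $c_jP$ on the right of \eqref{one} contributes exactly $c_j$ times the corresponding copy of $V$, while $c_5I_m$ yields $c_5\|u-\h u\|^2$. It then remains to recast this bound in the form of Definition \ref{definition5}. Exploiting the hypothesis $c_1>\sum_{i=2}^4 c_i$, I would set $\kappa_0=c_2+c_3+c_4$, $\kappa_1=c_2$, $\kappa_2=c_3$, $\kappa_3=c_4$, so that $\kappa_0=\sum_{i=1}^3\kappa_i\ge 0$, and split the negative term as $-c_1V=-\kappa_0V-\psi$ with $\psi(x,\h x):=(c_1-\kappa_0)(x-\h x)^TP(x-\h x)$. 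Then $\psi(x,\h x)\ge(c_1-\kappa_0)\lambda_{\min}(P)\|x-\h x\|^2=:\h\omega(\|x-\h x\|^2)$ with $\h\omega$ linear (hence class $\mathcal{K}$), and $\lim_{\|s\|\to\infty}\h\omega(\|s\|^2)/\ol\alpha(\|s\|^2)=(c_1-\kappa_0)\lambda_{\min}(P)/\lambda_{\max}(P)>0$; finally $\varphi(s):=c_5s^2$ is class $\mathcal{K}_\infty$. Hence $V$ is a $\delta$-ISS-M$_2$ Lyapunov function in the sense of Definition \ref{definition5}, and Corollary \ref{col1} concludes that $\Sigma_D$ is $\delta$-ISS-M$_2$.

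The main obstacle is the bookkeeping in the second paragraph: correctly expanding the diffusion trace into a sum of column-wise quadratic forms, and carefully tracking how the $(I+R_i)$ shift in the jump term splits $R_i$ between the diagonal block $\Delta$ and the off-diagonal $(x-\h x)$–$(p-\h p)$ coupling, so that the assembled $Q$ reproduces \eqref{one} block by block. Beyond this matching I expect no conceptual difficulty.
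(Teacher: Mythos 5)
Your proposal is correct and follows essentially the same route as the paper: the same quadratic candidate $V$ (up to a harmless factor of $\tfrac{1}{2}$), the same exact expansion of the generator \eqref{infi_gen1} into the quadratic form $\chi^TQ\chi$ matching the left-hand side of \eqref{one}, and the same choices $\kappa_0=c_2+c_3+c_4$, $\psi=(c_1-\kappa_0)V$ to fit Definition \ref{definition5}. The only difference is that you invoke Corollary \ref{col1} as a black box, whereas the paper re-runs that corollary's argument and then adds a generalized It\^{o}/Gronwall step to extract explicit $\beta(s,t)=\frac{\lambda_{\max}(P)}{\lambda_{\min}(P)}\mathsf{e}^{-\kappa t}s$ and $\gamma(s)=\frac{c_5}{\lambda_{\min}(P)\mathsf{e}\kappa}s^2$ — extra quantitative information (used later in the example) that the statement itself does not require.
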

\begin{proof}
	Consider a function $ V :\R^n\times\R^n\ra\R^+_0$ given by
	\begin{align}\label{lyap_1}
		V(x,\h x):=\frac{1}{2}(x-\h x)^TP(x-\h x),\ \ \ \forall x,\h x\in\R^n,
	\end{align}
	where $ P $ is a symmetric positive definite matrix.
	One can readily verify that the function V in (\ref{lyap_1}) satisfies properties (i) and (ii) in Definition \ref{definition4} with functions $ \ul \alpha(s):=\frac{1}{2}\lambda_{\min}(P)s $ and $ \ol \alpha(s):=\frac{1}{2}\lambda_{\max}(P)s $ for all $ s\in\R^+_0 $ and $q=2$. By considering the infinitesimal generator in (\ref{infi_gen1}) associated with the considered linear delayed jump-diffusion system, Lipschitz assumptions, Young's inequality, consistency of norms, and (\ref{one}) one can obtain the following chains of inequalities
	\begin{align}
		\mathcal{L}&V(x,\h{x}, y,\h{y},z,\h{z},p,\h{p},u,\h u)=(x-\h x)^TP(A_1(x-\h x)+A_2(y-\h y)+B(u-\h u))\nonumber\\&+\frac{1}{2}\sum_{i=1}^{\ol r}\Big(G_i(x-\h x)+\ol G_i(z-\h z)\Big)^TP\Big(G_i(x-\h x)+\ol G_i(z-\h z)\Big)+ 
		\frac{1}{2}\sum_{i=1}^{\tilde{r}}\lambda_i\Big[\Big((x-\h x)+R_i(x-\h x)+\ol R_i(p-\h p)\Big)^T\nonumber\\&P\Big((x-\h x)+R_i(x-\h x)+\ol R_i(p-\h p)\Big)-(x-\h x)^TP(x-\h x)\Big]\nonumber\\
		\leq&(x-\h x)^TP(A_1(x-\h x)+A_2(y-\h y)+B(u-\h u))+\frac{1}{2}\sum_{i=1}^{\ol r}\Big[(x-\h x)^TG_i^TPG_i(x-\h x)+(x-\h x)^TG_i^TP\ol G_i(z-\h z)\nonumber\\&+(z-\h z)^T\ol G_i^TPG_i(x-\h x)+(z-\h z)^T\ol G_i^TP\ol G_i(z-\h z)\Big]+ 
		\frac{1}{2}\sum_{i=1}^{\tilde{r}}\lambda_i\Big[(x-\h x)^TP(R_i(x-\h x)+\ol R_i(p-\h p))\nonumber\\&+(R_i(x-\h x)+\ol R_i(p-\h p))^TP(x-\h x)+(R_i(x-\h x)+\ol R_i(p-\h p))^TP(R_i(x-\h x)+\ol R_i(p-\h p))\Big]\nonumber\\
		\leq&(x-\h x)^TP(A_1(x-\h x)+A_2(y-\h y)+B(u-\h u))+\frac{1}{2}\sum_{i=1}^{\ol r}\Big[(x-\h x)^TG_i^TPG_i(x-\h x)+(x-\h x)^TG_i^TP\ol G_i(z-\h z)\nonumber\\
		&+(z-\h z)^T\ol G_i^TPG_i(x-\h x)+(z-\h z)^T\ol G_i^TP\ol G_i(z-\h z)\Big]+\frac{1}{2}\sum_{i=1}^{\tilde{r}}\lambda_i\Big[(x-\h x)^TP(R_i(x-\h x)+\ol R_i(p-\h p))\nonumber\\
		&+((x-\h x)^TR_i^T+(p-\h p)^T\ol R_i^T)P(x-\h x)+(x-\h x)^TR_i^TPR_i(x-\h x)+(x-\h x)^T R_i^TP\ol R_i(p-\h p)\nonumber\\
		&+(p-\h p)^T\ol R_i^TPR_i(x-\h x)+(p-\h p)^T\ol R_i^TP\ol R_i(p-\h p))\Big]\nonumber\\
		\leq&\frac{1}{2}\begin{bmatrix}
		x-\h x\\
		y-\h y\\
		z-\h z\\
		p-\h p\\
		u-\h u
		\end{bmatrix}^T
		\begin{bmatrix}
			\Delta & \hspace{-.3em}PA_2 & \hspace{-.2em}\sum\limits_{i=1}^{\bar{r}}G_i^TP\overline{G}_i & \sum\limits_{i=1}^{\tilde{r}}\lambda_i(P\overline{R}_i\hspace{-.2em}+\hspace{-.2em}R_i^TP\overline{R}_i) & \hspace{-.3em}PB\\ 
			A_2^TP & 0 & 0 & 0 & 0\\ 
			\sum\limits_{i=1}^{\bar{r}}\overline{G}_i^TPG_i& 0 & \sum\limits_{i=1}^{\bar{r}}\overline{G}_i^TP\overline{G}_i & 0 & 0\\ 
			\sum\limits_{i=1}^{\tilde{r}}\lambda_i(\overline{R}_i^TP\hspace{-.2em}+\hspace{-.2em}\overline{R}_i^TPR_i) & 0 & 0 & \sum\limits_{i=1}^{\tilde{r}}\lambda_i\overline{R}_i^TP\overline{R}_i  & 0\\ 
			B^TP & 0 & 0 & 0 & 0
		\end{bmatrix}
		\begin{bmatrix}
		x-\h x\\
		y-\h y\\
		z-\h z\\
		p-\h p\\
		u-\h u
		\end{bmatrix}\nonumber\\
		\leq&\frac{1}{2}\big(-c_1(x-\h x)^TP(x-\h x)+c_2(y-\h y)^TP(y-\h y)+c_3(z-\h z)^TP(z-\h z)+c_4(p-\h p)^TP(p-\h p)+c_5\|u-\h u\|^2\big)\nonumber\\
		\leq&-c_1V(x,\h x)+c_2V(y,\h y)+c_3V(z,\h z)+c_4V(p,\h p)+\frac{c_5}{2}\|u-\h u\|^2.\nonumber
	\end{align}
Thus by following the proof of Corollary \ref{col1} with $ \kappa_0=\sum_{i=2}^4c_i $, $ \psi(x,\h x)=(c_1-\kappa_0)V(x,\h x) $, $ \omega(x,\h x)= \frac{1}{1+\kappa_0}\psi(x,\h x)=\kappa V(x,\h x) $, where $ \kappa=\frac{c_1-\kappa_0}{1+\kappa_0} $, $\forall t\geq0 $, $\forall \zeta, \hat{\zeta} \in \mathcal{C}^b_{\mathcal{F}_0}([-\tau, 0];\R^n)$, and $\forall \phi,\h \phi\in\mathbf{L}_{\mathcal F_t}^k([-\tau,0];\R^n) $ satisfying (\ref{con2}), one obtains
	\begin{align}\label{xyz}
	\EE[\mathcal{L}V(\phi,\h \phi,u,\h u)]\leq-\kappa\EE[V(\xi_{\zeta,\upsilon}(t),\xi_{\h\zeta,\h\upsilon}(t))]+\frac{c_5}{2}\|u-\h u\|^2.
	\end{align}
	By using generalized Ito's formula, (\ref{xyz}), and condition (ii) in Definition \ref{definition4}, we have
	\begin{align*}
	\EE[V(\xi_{\zeta,\upsilon}(t),\xi_{\h\zeta,\h\upsilon}(t))]&=\EE[V(\xi_{\zeta,\upsilon}(0),\xi_{\h\zeta,\h\upsilon}(0))+\int_{0}^{t}\mathcal{L}V(\phi,\h\phi,u,\h u) \diff s]=\EE[V(\xi_{\zeta,\upsilon}(0),\xi_{\h\zeta,\h\upsilon}(0))]+\int_{0}^{t}\EE[\mathcal{L}V(\phi,\h\phi,u,\h u)] \diff s
	\nonumber\\&\leq\frac{\lambda_{\max}(P)}{2}\EE[\|\xi_{\zeta,\upsilon}(0)-\xi_{\h\zeta,\h\upsilon}(0)\|^2]+\int_{0}^{t}\EE[\mathcal{L}V(\phi,\h\phi)] \diff s\nonumber
	\nonumber\\&\leq\frac{\lambda_{\max}(P)}{2}\EE[\|\zeta-\h\zeta\|_{[-\tau,0]}^2]+\int_{0}^{t}\Big(-\kappa\EE[V(\xi_{\zeta,\upsilon}(s),\xi_{\h\zeta,\h\upsilon}(s))]+\frac{c_5}{2}\|\upsilon(s)-\h \upsilon(s)\|^2\Big) \diff s\nonumber\\
	&\leq\frac{\lambda_{\max}(P)}{2}\EE[\|\zeta-\h\zeta\|_{[-\tau,0]}^2]+-\kappa\int_{0}^{t}\EE[V(\xi_{\zeta,\upsilon}(s),\xi_{\h\zeta,\h\upsilon}(s))] \diff s+\frac{c_5}{2}\|\upsilon-\h \upsilon\|_\infty^2 t,
	\end{align*}
	which, by virtue of Gronwall's inequality, leads to
	\begin{align*}
	\EE[V(\xi_{\zeta,\upsilon}(t),\xi_{\h\zeta,\h\upsilon}(t))]&\leq \frac{\lambda_{\max}(P)}{2}\EE[\|\zeta-\h\zeta\|_{[-\tau,0]}^2]\e^{-\kappa t}+\frac{c_5t\e^{-\kappa t}}{2}\|\upsilon-\h \upsilon\|_\infty^2\\
	&\leq \frac{\lambda_{\max}(P)}{2}\EE[\|\zeta-\h\zeta\|_{[-\tau,0]}^2]\e^{-\kappa t}+\frac{c_5}{2\e\kappa}\|\upsilon-\h \upsilon\|_\infty^2.
	\end{align*}
	Now by using condition (ii) in Definition \ref{definition4}, one obtains
   \begin{align}
    \frac{\lambda_{\min}(P)}{2}\EE[\|\xi_{\zeta,\upsilon}(t)-\xi_{\h\zeta,\h\upsilon}(t)\|^2]&\leq\EE[V(\xi_{\zeta,\upsilon}(t),\xi_{\h\zeta,\h\upsilon}(t))]\leq \frac{\lambda_{\max}(P)}{2}\EE[\|\zeta-\h\zeta\|_{[-\tau,0]}^2]\e^{-\kappa t}+\frac{c_5}{2\e\kappa}\|\upsilon-\h \upsilon\|_\infty^2,\nonumber
    \end{align}
    and, hence,
    \begin{align}
    \EE[\|\xi_{\zeta,\upsilon}(t)-\xi_{\h\zeta,\h\upsilon}(t)\|^2]&\leq \frac{\lambda_{\max}(P)}{\lambda_{\min}(P)}\EE[\|\zeta-\h\zeta\|_{[-\tau,0]}^2]\e^{-\kappa t}+\frac{c_5}{\lambda_{\min}(P)\e\kappa}\|\upsilon-\h \upsilon\|_\infty^2.\nonumber
    \end{align}
	 Therefore, by introducing functions $ \beta $ and $ \gamma $ as
	 \begin{align}
	 \beta(s,t)=\frac{\lambda_{\max}(P)}{\lambda_{\min}(P)}\e^{-\kappa t}s, \gamma(s)=\frac{c_5}{\lambda_{\min}(P)\e\kappa}s^2,
	 \end{align}
	 for any $ s,t\in\R^+_0 $, inequality (\ref{nnn}) is satisfied. 
\end{proof}
\begin{remark}
For fixed values of $c_i$, $i=\{1,\ldots,5\}$, the inequality \eqref{one} boils down to a linear matrix inequality (LMI) which can be solved efficiently using semidefinite programming. One may also solve a bilinear matrix inequality (BMI) (locally) using a $V-K$ iteration \cite{411398}. That is, for fixed values $c_i$, $i=\{1,\ldots,5\}$, we find matrix $P$ satisfying the LMI, and then for a fixed $P$ we find constants $c_i$, $i=\{1,\ldots,5\}$, to maximize the value of $ c_1-\sum_{i=2}^{4}c_i $, and we iterate until there is no improvement in the value of $ c_1-\sum_{i=2}^{4}c_i $. 
\end{remark}

In order to provide results on the construction of symbolic models and given a RJDS $\Sigma_R$, we introduce the corresponding non-probabilistic retarded systems (denoted by $\overline{\Sigma}_R$) obtained by removing diffusion and reset terms (that is, $ g $ and $ r $ in (\ref{stocha_delay})). From now onwards, we use notation $\overline{\xi}_{\zeta,\upsilon}(t)$ to denote the value of a trajectory of $ \ol\Sigma_R $ in $ \R^n $ and $\overline{\xi}_{t,\zeta,\upsilon}$ to denote the solution of $ \ol\Sigma_R $ in $ \mathcal{C}([-\tau, 0];\R^n) $ at time $ t \in \mathbb{R}^+_0 $ started from the non-probabilistic initial condition $ \zeta \in \mathcal{C}^b_{\mathcal{F}_0}([-\tau, 0];\R^n)$, where $ \mathcal{F}_0 $ is the trivial sigma-algebra, and under input signal $ \upsilon $.
Now, we provide a technical lemma which is used later to show a relation between non-probabilistic retarded systems $\overline{\Sigma}_R$ and their symbolic models.
\begin{lemma}
	Consider an incrementally input-to-state stable non-probabilistic retarded systems $\overline{\Sigma}_R$ corresponding to a $ \delta$-ISS-M$_q $ RJDS $\Sigma_R$ for $q\geq1$, that is for any $ t \in \mathbb{R}^+_0 $, any $ \zeta, \hat{\zeta} \in \mathcal{C}([-\tau, 0];\R^n)$, and any $ \upsilon, \hat{\upsilon}\in \mathcal{U} $, it satisfies
	\begin{align}
	\|\ol\xi_{\zeta,\upsilon}(t)\hspace{-.2em}-\hspace{-.2em}\ol\xi_{\hat{\zeta},\hat{\upsilon}}(t)\|^q\leq\hspace{-.2em}\beta(\|\zeta-\hat{\zeta}\|^q_{[-\tau, 0]},t)\hspace{-.2em}+\gamma(\|\upsilon-\hat{\upsilon}\|_\infty),
	\label{nnn1}
	\end{align}
	where $\beta$ and $\gamma$ are the functions appearing in \eqref{nnn}.
	Then there exists a $ \mathcal{KL} $ function $ \tilde{\beta} $ such that the following inequality holds:
		\begin{align}
			\|\ol\xi_{t,\zeta,\upsilon}\hspace{-.2em}-\hspace{-.2em}\ol\xi_{t,\hat{\zeta},\hat{\upsilon}}\|^q_{[-\tau,0]}\hspace{-.2em}\leq\hspace{-.2em}\tilde{\beta}(\|\zeta-\hat{\zeta}\|^q_{[-\tau, 0]},t)\hspace{-.2em}+\hspace{-.2em}\gamma(\|\upsilon-\hat{\upsilon}\|_\infty),
			\label{mmm1}
		\end{align}
		where $ \tilde{\beta}(s,t)=\mathsf{e}^{-(t-\tau)}s+\beta(s,\max\{0,t-\tau\}) $.
\end{lemma}
\begin{proof}
	The proof is inspired by the proof of Theorem 3 in \cite{pola2010symbolic}. From inequality \eqref{nnn1}, we obtain the following inequalities: 
		\begin{align}
			\|\xi_{t,\zeta,\upsilon}-\xi_{t,\hat{\zeta},\hat{\upsilon}}\|^q_{[-\tau,0]}&\leq\beta(\|\zeta-\hat{\zeta}\|^q_{[-\tau, 0]},t-\tau)+\gamma(\|\upsilon-\hat{\upsilon}\|_\infty), \ \forall t\geq\tau,
			\label{l1}
		\end{align}
		and
		\begin{align}
			\|\xi_{t,\zeta,\upsilon}-\xi_{t,\hat{\zeta},\hat{\upsilon}}\|^q_{[-\tau,0]}&\leq\|\zeta-\hat{\zeta}\|^q_{[-\tau, 0]}\hspace{-.2em}+\hspace{-.2em}\beta(\|\zeta-\hat{\zeta}\|^q_{[-\tau, 0]},0)+\gamma(\|\upsilon-\hat{\upsilon}\|_\infty), \ \forall t\in[0,\tau).
			\label{l2}
		\end{align}
		Moreover, we also have 
		\begin{align}
			\mathsf{e}^{-(t-\tau)}\|\zeta-\hat{\zeta}\|^q_{[-\tau, 0]}\geq\|\zeta-\hat{\zeta}\|^q_{[-\tau, 0]}, \ \ \  \ \forall t\in[0,\tau).
			\label{l3}
		\end{align}	
		Inequalities (\ref{l1}) and (\ref{l2}) along with (\ref{l3}) yield
		\begin{align}
			\|\xi_{t,\zeta,\upsilon}-\xi_{t,\hat{\zeta},\hat{\upsilon}}\|^q_{[-\tau,0]}\leq\mathsf{e}^{-(t-\tau)}\|\zeta-\hat{\zeta}\|^q_{[-\tau, 0]}+\beta(\|\zeta-\hat{\zeta}\|^q_{[-\tau, 0]},\max\{0,t-\tau\})\hspace{-.2em}+\hspace{-.2em}\gamma(\|\upsilon-\hat{\upsilon}\|_\infty), \ \forall t\geq 0.
			\nonumber
		\end{align}
		One can rewrite the last inequality as 
		\begin{align}
			\|\xi_{t,\zeta,\upsilon}-\xi_{t,\hat{\zeta},\hat{\upsilon}}\|^q_{[-\tau,0]}\leq\tilde{\beta}(\|\zeta-\hat{\zeta}\|^q_{[-\tau, 0]},t)\hspace{-.2em}+\hspace{-.2em}\gamma(\|\upsilon-\hat{\upsilon}\|_\infty),
			\nonumber
		\end{align}
		for all $ t\geq0 $, where $ \tilde{\beta}(s,t):=\mathsf{e}^{-(t-\tau)}s+\beta(s,\max\{0,t-\tau\}) $ is a $ \mathcal{KL} $ function.
\end{proof}

\section{Systems and Approximate Equivalence Relations}  
We recall the notion of \textit{system} introduced in \cite{tabuada2009verification} which later serves as a unified modeling framework for both retarded jump-diffusion systems and their finite abstractions.
\begin{definition}\label{definition6}
	A systems is a tuple $ S=(X,X_0,U, \longrightarrow, Y,H) $ where $ X $ is a set of states (possibly infinite), $ X_0\subseteq X $ is a set of initial states, $ U $ is a set of inputs (possibly infinite), $ \longrightarrow \subseteq X\times U\times X $ is a transition relation, $ Y $ is a set of outputs, and $ H:X\rightarrow Y $ is an output map. 
\end{definition}
We denote $ x \overset{u}{\longrightarrow} x' $ as an alternative representation for a transition $ (x,u,x')\in\longrightarrow  $, where state $ x' $ is called a $ u $-successor (or simply successor) of state $ x $, for some input $ u\in U $.       
Moreover, a system $ S $ is said to be
\begin{compactitem}
	\item \textit{metric}, if the output set $ Y $ is equipped with a metric $ \mathbf{d}: Y\times Y \rightarrow\R^+_0 $.
	\item \textit{finite} (or \textit{symbolic}), if $ X $ and $ U $ are finite.
	\item \textit{deterministic}, if there exists at most a $ u $-successor of $ x $, for any $ x\in X $ and $ u\in U $.
	\item \textit{nonblocking}, if for any $ x\in X $, there exists some $ u $-successor of $ x $, for some $ u\in U $.    
\end{compactitem}
For a system $ S $, the finite state-run generated from initial state $ x_0\in X_0 $ is a finite sequence of transitions:
\begin{align}
	x_0 \overset{u_0}{\longrightarrow} x_1\overset{u_1}{\longrightarrow} \cdots \overset{u_{k-2}}{\longrightarrow} x_{k-1}\overset{u_{k-1}}{\longrightarrow} x_k,
\end{align}
such that $ x_i\overset{u_i}{\longrightarrow} x_{i+1} $, for $ i\in\{0,1,\ldots ,k-1\} $. The associated finite output-run is given by $ y_i=H(x_i) $, for $ i\in\{0,1,\ldots ,k-1\} $. These finite runs can be directly extended to infinite runs as well.\\
Now, we provide the notion of approximate (bi)simulation relation between two systems, introduced in \cite{Girard1}, which is later used for analyzing and synthesizing controllers for retarded jump-diffusion systems.
\begin{definition}\label{definition7}
	Let $ S_1=(X_1,X_{10},U_1,\underset{1}{\longrightarrow}, Y_1,H_1) $ and $ S_2=(X_2,X_{20},U_2,\underset{2}{\longrightarrow}, Y_2,H_2) $ be two metric systems having the same output sets $ Y_1=Y_2 $ and metric $ \dd $. For $ \varepsilon\in\R^+_0 $, a relation $ \mathcal{R}\subseteq X_1 \times X_2 $ is said to be an $ \varepsilon $-approximate bisimulation relation between $ S_1 $ and $ S_2 $ if it satisfies the following conditions:
	\begin{enumerate}
		\item[(i)] $ \forall (x_1,x_2)\in \mathcal{R} $, we have $ \dd(H_1(x_1),H_2(x_2))\leq\varepsilon $;
		\item[(ii)] $ \forall (x_1,x_2)\in \mathcal{R} $, $ x_1\overset{u_1}{\underset{1}{\longrightarrow}}x'_1 $ in $ S_1 $ implies $ x_2\overset{u_2}{\underset{2}{\longrightarrow}}x'_2 $ in $ S_2 $ satisfying $ (x'_1,x'_2)\in \mathcal{R} $;
		\item[(iii)] $ \forall (x_1,x_2)\in \mathcal{R} $, $ x_2\overset{u_2}{\underset{2}{\longrightarrow}}x'_2 $ in $ S_2 $ implies $ x_1\overset{u_1}{\underset{1}{\longrightarrow}}x'_1 $ in $ S_1 $ satisfying $ (x'_1,x'_2)\in \mathcal{R} $.
	\end{enumerate} 
	If we remove condition (iii), then $ \mathcal{R}\subseteq X_1 \times X_2 $ is said to be an $ \varepsilon $-approximate simulation relation from $ S_1 $ to $ S_2 $. 
\end{definition} 
The system $ S_1 $ is $ \varepsilon $-approximate bisimilar to $ S_2 $, denoted by $ S_1 \cong_\mathcal{S}^\varepsilon S_2 $, if there exists an $ \varepsilon $-approximate bisimulation relation $ \mathcal{R} $ between $ S_1 $ and $ S_2 $ such that: $ \forall x_{10}\in X_{10} $, $ \exists x_{20}\in X_{20} $ with $ (x_{10},x_{20})\in \mathcal{R} $ and $ \forall x_{20}\in X_{20} $, $ \exists x_{10}\in X_{10} $ with $ (x_{10},x_{20})\in \mathcal{R} $. 
In order to present the main results of the paper, we need to employ the notion of system as an abstract representation of a retarded jump-diffusion system. First, we define a metric system associated with the retarded jump-diffusion system $ \Sigma_R $, denoted by $ S(\Sigma_R) = (X,X_0,U,\longrightarrow, Y,H)$, where
\begin{compactitem}
	\item $ X $ is the set of all $ \mathcal{C}([-\tau, 0];\R^n) $-valued random variables defined on the probability space $(\Omega, \mathcal{F}, \mathds{P})$;
	\item $X_0$ is a subset of $\mathcal{C}^b_{\mathcal{F}_0}([-\tau, 0];\R^n)$;
	\item $ U=\mathcal{U} $;
	\item $\zeta\overset{\upsilon}{\longrightarrow} \zeta' $ if $ \zeta $ and $ \zeta' $ are measurable in $ \mathcal{F}_t $ and $ \mathcal{F}_{t+h} $, respectively, for some $ t\in\R^+_0 $ and $  h\in\R^+ $, and there exists a solution $\xi_{t}\in \mathbf{L}_{\mathcal{F}_t}^q([-\tau,0];\R^n) $ of $ \Sigma_R $ satisfying $ \xi_t=\zeta $ and $ \xi_{h,\zeta,\upsilon}=\zeta' $ $ \PP $-a.s.;
	\item $ Y=X$;
	\item $ H(\zeta)=\zeta$.
\end{compactitem}
From now on, we restrict our attention to the sampled-data system, where control signals (in $ \Sigma_R $) are piecewise-constant over intervals of length $ h\in\R^+ $, i.e.
\[\mathcal{U}_h=\{\upsilon\in\mathcal{U} \st \upsilon(t)=\upsilon(ih), t\in[ih,(i+1)h),i\in\N_0\}.\] 
The metric systems associated with the sampled-data retarded jump-diffusion systems can be defined as $ S_h(\Sigma_R)=(X_h,X_{h0},U_h,\underset{h}{\longrightarrow},Y_h,H_h) $, where $ X_h=X $, $ X_{h0}=X_0 $, $ U_h=\mathcal{U}_h $, $ Y_h=Y $, $ H_h=H $, and $\zeta_h\underset{h}{\overset{\upsilon_h}{\longrightarrow}} \zeta_h' $ if $ \zeta_h $ and $ \zeta_h' $ are measurable in $ \mathcal{F}_{ih} $ and $ \mathcal{F}_{(i+1)h} $, respectively, for some $ i\in\N_0 $, and there exists a solution $\xi_{t}\in  \mathbf{L}_{\mathcal{F}_{t}}^q([-\tau,0];\R^n)$ of $ \Sigma_R $ satisfying $ \xi_{t}=\zeta_h $ and $ \xi_{h,\zeta_h,\upsilon_h}=\zeta_h' $ $ \PP $-a.s. In other words, a finite state-run of $ S_h(\Sigma_R) $, represented by $ \zeta_0\underset{h}{\overset{\upsilon_0}{\longrightarrow}} \zeta_1\underset{h}{\overset{\upsilon_1}{\longrightarrow}} \cdots \underset{h}{\overset{\upsilon_{k-1}}{\longrightarrow}} \zeta_k $, where $ \upsilon_i\in U_h $ and $ \zeta_{i+1}=\xi_{h,\zeta_i,\upsilon_i} $ $ \PP $-a.s. for $ i\in\{0,1,\ldots,k-1\} $, captures solutions of RJDS $ \Sigma_R $ at the sampling times $ t=0,h,\ldots ,kh $, started from $ \zeta_0\in X_0 $ and resulting from control input $ \upsilon $ obtained by the concatenation of input signals $ \upsilon_i \in \mathcal{U}_h$. Moreover, the corresponding finite output-run is $ \{y_0, y_1,\ldots,y_{k}\} $. Similarly, we consider metric systems corresponding to non-probabilistic sampled-data retarded systems denoted by $S_h(\ol\Sigma_R)=(\ol X_h,\ol X_{h0},\ol U_h,\underset{h}{\longrightarrow},\ol Y_h,\ol H_h) $ where $ \ol X_h= \mathcal{C}([-\tau, 0];\R^n) $, $ \ol X_{h0}\subseteq \ol X_h$, $ \ol U_h=\mathcal{U}_h $, $ \ol Y_h=\ol X_h$, $ \ol H_h(\zeta)=\zeta$, and $\zeta_h\underset{h}{\overset{\upsilon_h}{\longrightarrow}} \zeta_h' $ if $ \zeta_h'=\ol\xi_{h,\zeta_h,\upsilon_h}$. For later use, we represent an $\R^n$-valued output at the $k^{th}$ sampling instance starting from initial state $\zeta$ under input signal $\upsilon_h$ by $\ol\xi_{\zeta,\upsilon_h}(kh)$.\\
\section{Symbolic Models for RJDS}
\subsection{Finite Dimensional Abstractions}
In this subsection, we introduce a \textit{finite dimensional abstraction} for $ S_h(\ol \Sigma_R) $. Consider metric systems associated with the sampled-data retarded systems $ S_h(\ol\Sigma_R) $ and consider triple $ \rho=(h,N,\zeta_s) $ of parameters, where $ h\in\R^+ $ is the sampling time, $ N\in\N $ is a temporal horizon, and $ \zeta_s\in \mathcal{C}([-\tau, 0];\R^n)  $ is a source state. Let us define a metric system as
\begin{align*}
S_\rho(\ol\Sigma_R)&=(X_\rho,X_{\rho 0}, U_\rho,\underset{\rho}{\longrightarrow},Y_\rho,H_\rho),
\end{align*}
where
\begin{compactitem}
	\item $ X_\rho=\mathsf{U} $, $  X_{\rho 0}= X_\rho $, $  U_\rho=\mathsf{U}$, $ Y_\rho=Y_h $; 
	\item $x_\rho\underset{\rho}{\overset{u_\rho}{\longrightarrow}} x_\rho' $, where $ x_\rho=(u_1, u_2,\ldots ,u_N)\in X_\rho $, if and only if $ x'_\rho=(u_2,\ldots ,u_N,u_\rho) $;
	\item $ H_\rho(x_\rho)=\ol\xi_{Nh,\zeta_s,x_\rho}$.
\end{compactitem}
Here, we abuse notation by identifying $ x_\rho=(u_1, u_2,\ldots$,  $u_N)\in [\mathsf{U}]_\eta ^N $ as an input curve $\upsilon:[0,Nh)\to[\mathsf{U}]_\eta $ such that $\upsilon(t)= u_k$ for any $t\in[(k-1)h,kh)$ for $k\in\{1,\ldots,N\}$ in $\ol\xi_{Nh,\zeta_s,x_\rho}$. We use similar notations in the rest of the paper as well. Notice that the system $ S_\rho(\ol\Sigma_R) $ is deterministic, non-blocking, and \emph{finite dimensional} (but not necessarily symbolic unless $\mathsf{U}$ is a finite set). Note that $ H_\rho $ is the output map from non-probabilistic state $ x_\rho\in X_\rho $ to a $\mathcal{C}([-\tau, 0];\R^n)$-valued solution process $ \ol\xi_{Nh,\zeta_s,x_\rho} $ and corresponding $\R^n$-valued solution is represented by $ \ol\xi_{\zeta_s,x_\rho}(Nh)$.\\

The next theorem provides the results on the construction of finite dimensional abstractions which are approximately bisimilar to $S_h(\ol\Sigma_R)$.
\begin{theorem}\label{theorem2}
	Consider a retarded system $ \ol\Sigma_R $ corresponding to $\delta$-ISS-M$_q$ RJDS $\Sigma_R$ for $q\geq1$. Given any $ \varepsilon>0 $, let the sampling time $ h$, temporal horizon $ N $, and source  state $ \zeta_s $ be such that
	\begin{align}\label{asdf}
	\tilde\beta(\varepsilon,h))+\tilde{\beta}(\mathcal{Z}(\zeta_s),Nh)\leq\varepsilon,
	\end{align}
	where $ \mathcal{Z}(\zeta_s)=\underset{u_1\in\mathsf{U}}{\sup} \|\ol\xi_{h,\zeta_s,u_1}-\zeta_s\|^q_{[-\tau,0]}$. Then, the relation 
	\[{\mathcal{R}}_1=\{(\zeta,x_\rho)\in \ol X_h\times X_\rho \st \|\ol H_h(\zeta)-{H}_\rho(x_\rho)\|^q_{[-\tau,0]} \leq\varepsilon\}\]
	is an $ \varepsilon $-approximate bisimulation relation between $ S_h(\ol\Sigma_R) $ and $ {S}_\rho(\ol\Sigma_R) $.
\end{theorem}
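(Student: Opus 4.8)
Condition (i) of Definition~\ref{definition7} holds by the very definition of $\ol{\mathcal R}_1$: for any $(\zeta,x_\rho)\in\ol{\mathcal R}_1$ one has $\dd(H(\zeta),\ol H_\rho(x_\rho))=(\EE[\|\zeta-\ol\xi_{Nh,\zeta_s,x_\rho}\|_{[-\tau,0]}^k])^{1/k}\le\varepsilon$. For the transition conditions I would exploit that both $S_h(\Sigma_R)$ and $\ol S_\rho(\Sigma_R)$ are deterministic and that their one-step dynamics are in exact correspondence: applying a piecewise-constant input with value $u_\rho\in\mathsf U$ over $[0,h)$ in $S_h(\Sigma_R)$, giving $\zeta'=\xi_{h,\zeta,u_\rho}$, corresponds to the shift $x_\rho=(u_1,\dots,u_N)\mapsto x_\rho'=(u_2,\dots,u_N,u_\rho)$ in $\ol S_\rho(\Sigma_R)$. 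Hence conditions (ii) and (iii) collapse to a single inequality, and it suffices to prove that $(\zeta,x_\rho)\in\ol{\mathcal R}_1$ implies $(\EE[\|\xi_{h,\zeta,u_\rho}-\ol\xi_{Nh,\zeta_s,x_\rho'}\|_{[-\tau,0]}^k])^{1/k}\le\varepsilon$.

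The plan is to bound this distance by the triangle inequality for the $L^k$-metric $\dd$. Writing $\ol\eta:=\ol\xi_{Nh,\zeta_s,x_\rho}$ and $\bar u:=(u_1,\dots,u_N,u_\rho)$, the time-invariance of the noiseless system $\ol\Sigma_R$ gives the two semigroup identities $\ol\xi_{(N+1)h,\zeta_s,\bar u}=\ol\xi_{h,\ol\eta,u_\rho}$ and $\ol\xi_{(N+1)h,\zeta_s,\bar u}=\ol\xi_{Nh,\ol\xi_{h,\zeta_s,u_1},x_\rho'}$, which I would use to insert $\xi_{h,\ol\eta,u_\rho}$ and $\ol\xi_{(N+1)h,\zeta_s,\bar u}$ as intermediate points:
\[
\dd(\xi_{h,\zeta,u_\rho},\ol\xi_{Nh,\zeta_s,x_\rho'})\le \dd(\xi_{h,\zeta,u_\rho},\xi_{h,\ol\eta,u_\rho})+\dd(\xi_{h,\ol\eta,u_\rho},\ol\xi_{(N+1)h,\zeta_s,\bar u})+\dd(\ol\xi_{(N+1)h,\zeta_s,\bar u},\ol\xi_{Nh,\zeta_s,x_\rho'}).
\]
The first term pairs two solutions of $\Sigma_R$ driven by the same input $u_\rho$ over one period, with initial data satisfying $\EE[\|\zeta-\ol\eta\|_{[-\tau,0]}^k]\le\varepsilon^k$ (membership in $\ol{\mathcal R}_1$); Lemma~\ref{lemma1} then bounds it by $(\tilde\beta(\varepsilon^k,h))^{1/k}$, since the common input makes the $\gamma$-contribution vanish. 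The third term compares two noiseless trajectories driven by the same shifted sequence $x_\rho'$ but started from $\ol\xi_{h,\zeta_s,u_1}$ and $\zeta_s$; since $\ol\Sigma_R$ inherits the estimate of Lemma~\ref{lemma1} (its expectation being vacuous) and the initial mismatch is at most $\mathcal Z(\zeta_s)$, this term is at most $(\tilde\beta(\mathcal Z(\zeta_s),Nh))^{1/k}$. The middle term is exactly the deviation between a solution of $\Sigma_R$ and its diffusion- and reset-free counterpart started from the same deterministic data, i.e. the object estimated by Lemma~\ref{l6}, which furnishes the remaining $\sigma$-summand. Summing and invoking \eqref{asdf} yields the desired bound $\le\varepsilon$, closing both (ii) and (iii).

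Two steps will need the most care. First, Lemma~\ref{lemma1} is phrased for $\mathcal F_0$-measurable initial conditions, whereas here $\zeta$ and $\ol\eta$ are evaluated at the sampling instant; I would legitimize its application by conditioning on the corresponding $\sigma$-algebra and invoking the tower rule and time-homogeneity of $\Sigma_R$, so that the two one-step solutions are driven by the same Brownian and Poisson increments, exactly as in the proof of Lemma~\ref{lemma1}. Second, and this is the genuine crux, one must match the middle term to the stated envelope $(\sigma_{\zeta_s}(L_g,L_r,(N+1)h))^{1/k}$: the fresh one-step stochastic--noiseless gap measured from $\ol\eta$ has to be dominated by the full-horizon bound of Lemma~\ref{l6} issued from the source state $\zeta_s$ over $(N+1)h$ (equivalently, one routes the comparison through the full stochastic trajectory $\xi_{(N+1)h,\zeta_s,\bar u}$ and uses the monotone decaying envelope $\sigma_{\zeta_s}$ constructed there). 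Ensuring that this envelope dominates the relevant discrepancy, so that no term in \eqref{asdf} is under-counted, is the estimate I expect to require the most bookkeeping.
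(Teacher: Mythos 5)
Your proposal follows the paper's template at the top level: condition (i) is immediate from the definition of $\ol{\mathcal R}_1$, determinism collapses (ii) and (iii) into a single one-step estimate, and that estimate is attacked via a triangle inequality with two intermediate points, invoking Lemma~\ref{lemma1} twice and Lemma~\ref{l6} once. However, your choice of intermediate points is not the paper's, and it creates a gap that is not ``bookkeeping'' and cannot be closed as you have set things up. With $\ol\eta=\ol\xi_{Nh,\zeta_s,x_\rho}$, your middle term is the one-step stochastic-versus-noiseless gap $\dd\bigl(\xi_{h,\ol\eta,u_\rho},\ol\xi_{h,\ol\eta,u_\rho}\bigr)$, and what Lemma~\ref{l6} actually yields for it is $(\sigma_{\ol\eta}(L_g,L_r,h))^{1/k}$: the envelope issued from the state $\ol\eta$ over the horizon $h$. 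Hypothesis \eqref{asdf} only budgets for $(\sigma_{\zeta_s}(L_g,L_r,(N+1)h))^{1/k}$, and the domination $\sigma_{\ol\eta}(L_g,L_r,h)\le\sigma_{\zeta_s}(L_g,L_r,(N+1)h)$ that you would need is false in general: $\sigma_{\zeta_s}(L_g,L_r,\cdot)$ is decreasing and tends to zero, so the right-hand side vanishes as $N\to\infty$, whereas $\sigma_{\ol\eta}(L_g,L_r,h)=\vartheta_{\ol\eta}(L_g,L_r,\max\{h-\tau,0\})$ sits at or near the peak of the decreasing envelope $\vartheta_{\ol\eta}$ and stays bounded away from zero for fixed $h$, $L_g,L_r>0$, and nonzero $\ol\eta$. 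Hence, for large $N$ your bound exceeds $\varepsilon$ even though \eqref{asdf} holds, so the decomposition itself is the problem, not the estimates.

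Your fallback (``route the comparison through the full stochastic trajectory $\xi_{(N+1)h,\zeta_s,\bar u}$'') is not an equivalent rewriting of your argument; it is a genuinely different decomposition, and it is the one the paper uses. The paper inserts $\xi_{(N+1)h,\zeta_s,\ol{x}_\rho}$ and $\ol\xi_{(N+1)h,\zeta_s,\ol{x}_\rho}$, so that Lemma~\ref{l6} is applied exactly once, from the source $\zeta_s$ over the full horizon $(N+1)h$, matching \eqref{asdf} verbatim; the price is that the one-step ISS estimate of Lemma~\ref{lemma1} is then applied to the pair $\xi_{h,\zeta,\upsilon_h}$ and $\xi_{h,\xi_{Nh,\zeta_s,x_\rho},u_\rho}$, so the mismatch entering $\tilde\beta(\cdot,h)$ is $\EE[\|\zeta-\xi_{Nh,\zeta_s,x_\rho}\|^k_{[-\tau,0]}]$, measured against the \emph{stochastic} trajectory, which the paper bounds by $\varepsilon^k$ by appealing to membership in $\ol{\mathcal R}_1$ (strictly, membership controls the distance to the noiseless output $\ol H_\rho(x_\rho)$, which is exactly the tension you sensed; patching either decomposition honestly introduces an extra $\sigma$-type term not present in \eqref{asdf}). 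The point for your grade is this: you cannot keep your clean first term and the paper's clean middle term simultaneously, because they correspond to incompatible insertions, and your proposal neither proves the domination your middle term requires (it is false) nor carries out the alternative route. As submitted, the proof is incomplete at precisely the step you flagged.
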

\begin{proof}
	Consider any $ (\zeta,x_\rho)\in{\mathcal{R}}_1 $, where $ \zeta\in \ol X_h $ and $  x_\rho=(u_1,u_2, \ldots ,u_N)\in X_\rho$. Then we have $ \|\ol H_h(\zeta)-{H}_\rho(x_\rho)\|^q_{[-\tau,0]}\leq\varepsilon $. Thus condition (i) in Definition \ref{definition7} holds. Now we show that condition (ii) in Definition \ref{definition7} holds. Consider any $  \upsilon_h:[0,h[\to u_h$ for some $u_h\in\mathsf{U}$ and $ \zeta'=\ol\xi_{h,\zeta,\upsilon_h}  $. Consider $u_\rho=u_h$ and $ x_\rho'=(u_2,\ldots ,u_N,u_\rho) $ and let $ \ol{x}_\rho=(u_1,u_2,\ldots ,u_N,u_\rho)  $ denote input sequence in $ \mathsf{U}^{N+1} $. With the help of triangle inequality and $ \eqref{mmm1} $ one obtains the following chains of inequalities: \\	
	\begin{align}
	&\|\ol H_h(\zeta')-{H}_\rho(x_\rho')\|^q_{[-\tau,0]}=\|\ol H_h(\zeta')-H_\rho(\ol{x}_\rho)+H_\rho(\ol{x}_\rho)-{H}_\rho(x_\rho')\|^q_{[-\tau,0]}\nonumber\\
	&= \|\ol\xi_{h,\zeta,\upsilon_h}-\ol\xi_{(N+1)h,\zeta_s,\ol{x}_\rho}+\ol\xi_{(N+1)h,\zeta_s,\ol{x}_\rho}-\ol\xi_{Nh,\zeta_s,x_\rho'} \|^q_{[-\tau,0]}\nonumber\\
	&\leq\|\ol\xi_{h,\zeta,\upsilon_h}-\ol\xi_{(N+1)h,\zeta_s,\ol{x}_\rho}\|^q_{[-\tau,0]}+\|\ol\xi_{(N+1)h,\zeta_s,\ol{x}_\rho}-\ol\xi_{Nh,\zeta_s,x_\rho'} \|^q_{[-\tau,0]}\nonumber\nonumber\\
	&\leq \|\ol\xi_{h,\zeta,\upsilon_h}-\ol\xi_{h,\ol\xi_{Nh,\zeta_s,x_\rho},u_\rho}\|^q_{[-\tau,0]}+\|\ol\xi_{Nh,\ol\xi_{h,\zeta_s,u_1},x'_\rho}-\ol\xi_{Nh,\zeta_s,x_\rho'} \|^q_{[-\tau,0]}\nonumber\\
	&\leq\tilde{\beta}(\|\zeta-\ol\xi_{Nh,\zeta_s,x_\rho}\|^q_{[-\tau,0]},h)+\tilde{\beta}(\|\ol\xi_{h,\zeta_s,\upsilon_1}-\zeta_s\|^q_{[-\tau,0]},Nh)\leq\tilde{\beta}(\varepsilon,h)+\tilde{\beta}(\mathcal{Z}(\zeta_s),Nh)\leq\varepsilon.\nonumber
	\end{align}
	Hence, $ (\zeta',x'_\rho)\in{\mathcal{R}}_1 $. Thus condition (ii) in Definition \ref{definition7} holds. In a similar way, one can show that condition (iii) in Definition \ref{definition7} holds which completes the proof.
\end{proof}
Note that in the above theorem, given any $ \varepsilon>0 $, one can select temporal horizon $ N $ to be sufficiently large to enforce term $ \tilde{\beta}(\mathcal{Z}(\zeta_s),Nh) $ to be sufficiently small. This results in $ \tilde{\beta}(\varepsilon,h)<\varepsilon $ which enforces a lower bound for the sampling time $ h $.
Now we establish the results on the existence of finite dimensional abstraction $S_\rho(\ol\Sigma_R)$ such that $ S_h(\ol\Sigma_R) \cong_\mathcal{S}^\varepsilon  S_\rho(\ol\Sigma_R) $ given the result in Theorem \ref{theorem2}.
\begin{theorem}\label{theorem6}
	Consider the results in Theorem \ref{theorem2}. If we select
	\[ X_{h0}\hspace{-.2em}\subseteq\hspace{-.2em}\{\zeta\in \mathcal{C}([-\tau, 0];\R^n)\hspace{-.2em} \st \hspace{-.2em}\|\ol H_h(\zeta)\hspace{-.2em}-\hspace{-.2em}H_{\rho}(x_{\rho 0}\hspace{-.2em})\|^q_{[-\tau,0]}\hspace{-.2em}\leq\hspace{-.2em}\varepsilon, \exists x_{\rho 0}\hspace{-.2em}\in\hspace{-.2em} X_{\rho 0}\},\] 
	\normalsize
	then we have $ S_h(\ol\Sigma_R) \cong_\mathcal{S}^\varepsilon S_\rho(\ol\Sigma_R) $.
\end{theorem}
\begin{proof}
	For every $ \zeta\in \ol X_{h0}$, there always exists $ x_{\rho 0}\in X_{\rho 0} $ such that  $ \|\ol H_h(\zeta)- H_{\rho}(x_{\rho 0})\|^q_{[-\tau,0]}\leq\varepsilon $. Hence $ (\zeta,x_{\rho 0})\in{\mathcal{R}}_1 $.  In a similar way, we can show that for every $ x_{\rho 0}\in X_{\rho 0} $ there exists $ \zeta\in \ol X_{h0}$ such that $ (\zeta,x_{\rho 0})\in {\mathcal{R}}_1$, which completes the proof.
\end{proof}

\subsection{Finite Abstractions}
In this subsection, we provide a finite (a.k.a. symbolic) abstraction for $ S_h(\ol\Sigma_R) $ by quantizing input set $ \mathsf{U}$. Let us consider tuple $ \ol\rho=(h,N,\zeta_s,\eta) $, where $ \eta>0 $ is a quantization parameter and the quantized input set is denoted by $ [\mathsf{U}]_\eta $ (cf. Notation in Subsection \ref{II1}). Now, we can define the corresponding finite systems as 
\[S_{\ol\rho}(\ol\Sigma_R) =(X_{\ol\rho}, X_{\ol\rho 0}, U_{\ol\rho}, \underset{\ol\rho}{\longrightarrow}, Y_{\ol\rho,} H_{\ol\rho}),\]
where
\begin{compactitem}
	\item $ X_{\ol\rho}=[\mathsf{U}]_\eta^N $,$  X_{\ol\rho 0}= X_{\ol\rho} $, $  U_{\ol\rho}=[\mathsf{U}]_\eta $, $ Y_{\ol\rho}=Y_h $; 
	\item $x_{\ol\rho}\underset{{\ol\rho}}{\overset{\ol u_{\ol\rho}}{\longrightarrow}} x_{\ol\rho}' $, where $ x_{\ol\rho}=(\ol u_1, \ol u_2,\ldots ,\ol u_N)\in X_{\ol\rho} $, if and only if $ x'_{\ol\rho}=(\ol u_2,\ldots ,\ol u_N,\ol u_{\ol\rho}) $;
	\item $ H_{\ol\rho}(x_{\ol\rho})=\ol\xi_{Nh,\zeta_s,x_{\ol\rho}} $.
\end{compactitem}
A finite state-run of $ S_{\ol\rho}(\ol\Sigma_R) $ is represented by $ x_{\ol\rho}(0)\underset{\ol\rho}{\overset{\upsilon_0}{\longrightarrow}} x_{\ol\rho}(1)\underset{\ol\rho}{\overset{\upsilon_1}{\longrightarrow}} \cdots \underset{\ol\rho}{\overset{\upsilon_{k-1}}{\longrightarrow}} x_{\ol\rho}(k) $, where $\upsilon_i\in U_{\ol\rho}$. 
For later use, we denote $\tilde Y_{y_0,\upsilon_{\ol\rho}}:\N_0\ra\R^n$ an $\R^n$-valued output-run of $S_{\ol\rho}(\ol\Sigma_R)$ starting from $y_0=\ol\xi_{\zeta_s,x_{\ol\rho}(0)}(Nh)$ under input signal $\upsilon_{\ol\rho}$.
In order to provide an approximate bisimulation relation between sampled retarded systems and symbolic models, we need the following technical lemma.   
\begin{lemma}\label{lem1}
	Consider a retarded system $ \ol\Sigma_R $ corresponding to $\delta$-ISS-M$_q$ RJDS $\Sigma_R$ for $q\geq1$ and a quantization parameter $\eta$ such that $ 0<\eta\leq span(\mathsf{U}) $. Then the relation ${\mathcal{R}}_2$ given by
	\begin{align}
	{\mathcal{R}}_2=\{&(x_\rho,x_{\ol\rho})\in X_\rho\times X_{\ol\rho} \st x_\rho=(u_1,u_2,\ldots ,u_N), x_{\ol\rho}=([u_1]_\eta,[u_2]_\eta,\ldots ,[u_N]_\eta) \}
	\end{align}
	is a $ \gamma(\eta) $-approximate bisimulation relation between $ S_\rho(\ol\Sigma_R)  $ and $ S_{\ol\rho}(\ol\Sigma_R)  $, and $ S_\rho(\ol\Sigma_R) \cong_\mathcal{S}^{\gamma(\eta)} S_{\ol\rho}(\ol\Sigma_R) $.
\end{lemma}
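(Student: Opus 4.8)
The plan is to verify directly the three defining conditions of an $\varepsilon$-approximate bisimulation relation from Definition \ref{definition7}, with $\varepsilon=(\gamma(\eta))^\frac{1}{k}$, and then to check the initial-state compatibility required for $\ol S_\rho(\Sigma_R)\cong_\mathcal{S}^{(\gamma(\eta))^\frac{1}{k}}\ol S_{\ol\rho}(\Sigma_R)$. The observation that makes the argument work is that both output maps $\ol H_\rho$ and $\ol H_{\ol\rho}$ return \emph{non-probabilistic} trajectories of $\ol\Sigma_R$ issued from the \emph{same} source state $\zeta_s$; only the applied input curves differ, and those input curves are related by coordinatewise quantization.

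For condition (i) I would take $(x_\rho,x_{\ol\rho})\in\ol{\mathcal{R}}_2$ with $x_\rho=(u_1,\ldots,u_N)$ and $x_{\ol\rho}=([u_1]_\eta,\ldots,[u_N]_\eta)$, and note that since $\ol H_\rho(x_\rho)=\ol\xi_{Nh,\zeta_s,x_\rho}$ and $\ol H_{\ol\rho}(x_{\ol\rho})=\ol\xi_{Nh,\zeta_s,x_{\ol\rho}}$ are deterministic, the metric collapses to $\dd(\ol H_\rho(x_\rho),\ol H_{\ol\rho}(x_{\ol\rho}))=\|\ol\xi_{Nh,\zeta_s,x_\rho}-\ol\xi_{Nh,\zeta_s,x_{\ol\rho}}\|_{[-\tau,0]}$. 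Applying the incremental estimate \eqref{mmm} of Lemma \ref{lemma1} to these two trajectories, which share the initial condition $\zeta_s$, the $\tilde{\beta}$-term vanishes because $\tilde{\beta}(0,Nh)=0$, leaving $\|\ol\xi_{Nh,\zeta_s,x_\rho}-\ol\xi_{Nh,\zeta_s,x_{\ol\rho}}\|_{[-\tau,0]}^k\leq\gamma(\|x_\rho-x_{\ol\rho}\|_\infty)$. Interpreting $x_\rho$ and $x_{\ol\rho}$ as piecewise-constant input curves, their $\|\cdot\|_\infty$-distance equals $\max_{1\leq i\leq N}\|u_i-[u_i]_\eta\|$, which is at most $\eta$ by the covering property of $[\R^m]_\eta$ recalled in Subsection \ref{II1}; monotonicity of $\gamma$ then gives $\dd(\ol H_\rho(x_\rho),\ol H_{\ol\rho}(x_{\ol\rho}))\leq(\gamma(\eta))^\frac{1}{k}$.

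Conditions (ii) and (iii) are combinatorial, since the transition relations merely shift and append a symbol. Given $(x_\rho,x_{\ol\rho})\in\ol{\mathcal{R}}_2$ and $x_\rho\underset{\rho}{\overset{u_\rho}{\longrightarrow}}x_\rho'=(u_2,\ldots,u_N,u_\rho)$, I would pick the input $[u_\rho]_\eta\in[\mathsf{U}]_\eta$ in $\ol S_{\ol\rho}(\Sigma_R)$, yielding $x_{\ol\rho}'=([u_2]_\eta,\ldots,[u_N]_\eta,[u_\rho]_\eta)$, which is exactly the coordinatewise quantization of $x_\rho'$, so $(x_\rho',x_{\ol\rho}')\in\ol{\mathcal{R}}_2$. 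Conversely, given $x_{\ol\rho}\underset{\ol\rho}{\overset{\ol u_{\ol\rho}}{\longrightarrow}}x_{\ol\rho}'$ with $\ol u_{\ol\rho}\in[\mathsf{U}]_\eta$, I would choose the same symbol $u_\rho=\ol u_{\ol\rho}\in[\mathsf{U}]_\eta\subseteq\mathsf{U}$ in $\ol S_\rho(\Sigma_R)$; since quantization fixes already-quantized points, i.e. $[\ol u_{\ol\rho}]_\eta=\ol u_{\ol\rho}$, the successor $x_{\ol\rho}'$ is again the quantization of $x_\rho'$, giving $(x_\rho',x_{\ol\rho}')\in\ol{\mathcal{R}}_2$. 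For the bisimilarity I would then check initial-state compatibility: every $x_{\rho 0}\in X_{\rho 0}=\mathsf{U}^N$ is related to its quantization in $X_{\ol\rho 0}=[\mathsf{U}]_\eta^N$, and every $x_{\ol\rho 0}\in X_{\ol\rho 0}$ is related to itself viewed in $X_{\rho 0}$ (again using $[\mathsf{U}]_\eta\subseteq\mathsf{U}$ together with idempotency of quantization), whence $\ol S_\rho(\Sigma_R)\cong_\mathcal{S}^{(\gamma(\eta))^\frac{1}{k}}\ol S_{\ol\rho}(\Sigma_R)$.

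The only genuinely delicate point, used in condition (i), is the justification that the incremental estimate \eqref{mmm} applies to the non-probabilistic trajectories $\ol\xi$ of $\ol\Sigma_R$ with the same $\mathcal{KL}$ and $\mathcal{K}_\infty$ functions. This is the deterministic specialization of $\delta$-ISS-M$_k$ obtained by zeroing the diffusion and reset terms (equivalently, by noting that a deterministic trajectory coincides with its own expectation), and it is precisely the form already exploited in the proof of Theorem \ref{theorem2}. Everything else is bookkeeping on finite input words together with the elementary covering bound $\|u-[u]_\eta\|\leq\eta$.
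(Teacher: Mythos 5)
Your proposal is correct and follows essentially the same route as the paper's proof: the same incremental-stability estimate applied to the two noiseless trajectories issued from the common source state $\zeta_s$ (so the $\tilde{\beta}$-term drops and only $\gamma(\|x_\rho-x_{\ol\rho}\|_\infty)\leq\gamma(\eta)$ survives), the same purely combinatorial shift-and-quantize argument for conditions (ii) and (iii), and the same initial-state matching using $[\mathsf{U}]_\eta\subseteq\mathsf{U}$ and idempotency of quantization. Your explicit appeal to inequality \eqref{mmm} of Lemma \ref{lemma1} and your remark that the estimate transfers to the non-probabilistic trajectories of $\ol\Sigma_R$ merely make precise a step the paper states tersely as ``by the definition of $\delta$-ISS-M$_k$,'' so there is no substantive difference.
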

\begin{proof}
	Let $ (x_\rho,x_{\ol\rho})\in{\mathcal{R}}_2 $, then $ \|u_i-[u_i]_\eta\|\leq \eta $ for $ i\in\{1,2,\ldots,N \}$ implies that $ \|x_\rho-x_{\ol\rho}\|_\infty\leq \eta $. By using \eqref{mmm1}, one obtains
	\begin{align}
	\|{H}_\rho(x_\rho)\hspace{-.2em}-\hspace{-.2em}{H}_{\ol\rho}(x_{\ol\rho})\|^q\hspace{-.2em}&=\hspace{-.2em}\|\ol\xi_{Nh,\zeta_s,x_\rho}\hspace{-.2em}-\hspace{-.2em}\ol\xi_{Nh,\zeta_s,x_{\ol\rho}}\|^q_{[-\tau,0]}\nonumber\leq\gamma(\|x_\rho-x_{\ol\rho}\|_\infty)\leq\gamma(\eta).\nonumber
	\end{align}
	Then, the first condition in Definition \ref{definition7} holds. Now, consider any $ (x_\rho,x_{\ol\rho}) \in{\mathcal{R}}_2 $, where $x_\rho=(u_1,u_2,\ldots ,u_N)$ and $x_{\ol\rho}=(\ol u_1,\ol u_2,\ldots ,\ol u_N)$. Let $u\in U_\rho$ and consider $x_{\rho}\underset{{\rho}}{\overset{u}{\longrightarrow}} x_{\rho}' := (u_2,\ldots ,u_N,u)$ in $S_\rho(\ol\Sigma_R)$. Choose $\ol u=[u]_\eta$ and consider $x_{\ol\rho}\underset{{\ol\rho}}{\overset{\ol u}{\longrightarrow}} x_{\ol\rho}' := (\ol u_2,\ldots ,\ol u_N,\ol u)$ in $ S_{\ol\rho}(\ol\Sigma_R)$. It is obvious that $ (x_\rho',x_{\ol\rho}') \in{\mathcal{R}}_2 $ and, hence, condition (ii) in Definition \ref{definition7} holds. Similarly, condition (iii) in Definition \ref{definition7} holds which shows $ {\mathcal{R}}_2 $ is a $ \gamma(\eta)$-approximate bisimulation relation between $ S_\rho(\ol\Sigma_R)  $ and $  S_{\ol\rho}(\ol\Sigma_R) $. For any $x_{\rho 0}:=(u_1,u_2,\ldots ,u_N)\in X_{\rho 0}$, there always exists $x_{\ol\rho 0}:=([u_1]_\eta,[u_2]_\eta,\ldots ,[u_N]_\eta)\in X_{\ol\rho 0}$ and, hence,  $(x_{\rho 0},x_{\ol\rho 0})\in{\mathcal{R}}_2 $. Note that the existence of such $x_{\ol\rho 0}$ is guaranteed by $\mathsf{U}$ being a finite union of boxes and by the inequality $\eta\leq span(\mathsf{U})$. Moreover, for any $x_{\ol\rho 0}\in X_{\ol\rho 0}$ and by choosing $x_{\rho 0}=x_{\ol\rho 0}$, one readily gets $(x_{\rho 0},x_{\ol\rho 0})\in{\mathcal{R}}_2$ and, hence, $  S_\rho(\ol\Sigma_R) \cong_\mathcal{S}^{\gamma(\eta)} S_{\ol\rho}(\ol\Sigma_R) $.
\end{proof}
Now we provide the main results of this subsection on establishing an approximate bisimulation relation between $ S_h(\ol\Sigma_R) $ and $ S_{\ol\rho}(\ol\Sigma_R) $, which is an immediate consequence of the transitivity property of approximate bisimulation relations \cite[Proposition 4]{Girard1} as recalled next.
\begin{proposition}\label{propo}
	Consider metric systems $ S_1 $, $ S_2 $, and $ S_3 $ such that $ S_1\cong_\mathcal{S}^\delta S_2 $ and $ S_2\cong_\mathcal{S}^{\h \delta} S_3 $, for some $ \delta,\h \delta\in\R^+_0 $. Then, we have  $ S_1\cong_\mathcal{S}^{\delta+\h\delta} S_3 $.
\end{proposition}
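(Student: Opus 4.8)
The plan is to prove the proposition by forming the relational composition of the two given approximate bisimulation relations and verifying that it satisfies all three conditions of Definition \ref{definition7} with the additive precision $\delta+\h\delta$. By hypothesis there exist a $\delta$-approximate bisimulation relation $\mathcal{R}_{12}\subseteq X_1\times X_2$ witnessing $S_1\cong_\mathcal{S}^\delta S_2$ and a $\h\delta$-approximate bisimulation relation $\mathcal{R}_{23}\subseteq X_2\times X_3$ witnessing $S_2\cong_\mathcal{S}^{\h\delta} S_3$. I would define the candidate relation
\[\mathcal{R}_{13}:=\{(x_1,x_3)\in X_1\times X_3 \st \exists\, x_2\in X_2,\ (x_1,x_2)\in\mathcal{R}_{12}\ \text{and}\ (x_2,x_3)\in\mathcal{R}_{23}\}\]
and show it is a $(\delta+\h\delta)$-approximate bisimulation relation between $S_1$ and $S_3$. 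Note that since $Y_1=Y_2=Y_3$ and all three systems carry the same metric $\dd$, the output distances are directly comparable, which is exactly what makes the composition meaningful.

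For condition (i), take any $(x_1,x_3)\in\mathcal{R}_{13}$ with intermediate witness $x_2$. Applying condition (i) to each of $\mathcal{R}_{12}$ and $\mathcal{R}_{23}$ gives $\dd(H_1(x_1),H_2(x_2))\leq\delta$ and $\dd(H_2(x_2),H_3(x_3))\leq\h\delta$; the triangle inequality for $\dd$ then yields $\dd(H_1(x_1),H_3(x_3))\leq\delta+\h\delta$, as required. For condition (ii), suppose $(x_1,x_3)\in\mathcal{R}_{13}$ with witness $x_2$, and let $x_1\overset{u_1}{\underset{1}{\longrightarrow}}x_1'$ be a transition in $S_1$. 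Condition (ii) for $\mathcal{R}_{12}$ produces a transition $x_2\overset{u_2}{\underset{2}{\longrightarrow}}x_2'$ in $S_2$ with $(x_1',x_2')\in\mathcal{R}_{12}$; feeding this $S_2$-transition into condition (ii) for $\mathcal{R}_{23}$ produces $x_3\overset{u_3}{\underset{3}{\longrightarrow}}x_3'$ in $S_3$ with $(x_2',x_3')\in\mathcal{R}_{23}$. The new intermediate state $x_2'$ then witnesses $(x_1',x_3')\in\mathcal{R}_{13}$. Condition (iii) follows by the entirely symmetric argument, chaining a transition of $S_3$ back through $\mathcal{R}_{23}$ and then $\mathcal{R}_{12}$.

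It remains to check the initial-state requirements. For any $x_{10}\in X_{10}$, the relation $\mathcal{R}_{12}$ supplies some $x_{20}\in X_{20}$ with $(x_{10},x_{20})\in\mathcal{R}_{12}$, and then $\mathcal{R}_{23}$ supplies some $x_{30}\in X_{30}$ with $(x_{20},x_{30})\in\mathcal{R}_{23}$, whence $(x_{10},x_{30})\in\mathcal{R}_{13}$; the reverse direction starting from $x_{30}\in X_{30}$ is analogous. This establishes $S_1\cong_\mathcal{S}^{\delta+\h\delta} S_3$. I do not anticipate any genuine obstacle here: the argument is purely combinatorial once the composition is set up, and the only point demanding care is the bookkeeping of the intermediate witness $x_2$ (and its successor $x_2'$) as one passes a transition through both relations in sequence, together with the observation that the shared output metric is what legitimizes the triangle-inequality step.
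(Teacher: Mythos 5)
Your proof is correct: the relational composition $\mathcal{R}_{13}$, the triangle inequality for condition (i) (legitimate because all three systems share the same output set and metric), the chaining of transitions through the intermediate witness for conditions (ii)--(iii), and the chained initial-state check together constitute the standard proof of transitivity of approximate bisimulation. The paper itself offers no proof of this proposition --- it simply recalls the result from the cited reference (Proposition 4 of Girard and Pappas) --- and your argument is precisely the one given there, so there is nothing further to reconcile.
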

Now we provide the first main result of this section.
\begin{theorem}\label{theorem3}
	Consider a retarded system $ \ol\Sigma_R $ corresponding to $\delta$-ISS-M$_q$ RJDS $\Sigma_R$ for $q\geq1$. Given any $\varepsilon>0$ and a quantization parameter $ 0<\eta\leq span(\mathsf{U}) $, consider the results in Theorem \ref{theorem2} and Lemma \ref{lem1}. Then, the relation $ {\mathcal{R}} $ given by
	\begin{align*}
	{\mathcal{R}}=\big\{&(x_h,x_{\ol\rho})\in X_h\times \ol X_{\ol\rho} \st \exists x_\rho\in X_\rho, (x_h,x_\rho)\in{\mathcal{R}}_1 \ \text{and} \ (x_\rho,x_{\ol\rho})\in{\mathcal{R}}_2 \big\} 
	\end{align*}
	is an $ (\varepsilon+\gamma(\eta)) $-approximate bisimulation relation between $ S_h(\ol\Sigma_R) $ and $ S_{\ol\rho}(\ol\Sigma_R) $.
\end{theorem}
Note that relations ${\mathcal{R}}_1$ and ${\mathcal{R}}_2$ in Theorem \ref{theorem3} have been defined in Theorem \ref{theorem2} and Lemma \ref{lem1}, respectively. \\

Having the result in Theorem \ref{theorem3}, now we provide one of the main results of the paper that quantifies the closeness of the output trajectories of sampled retarded jump-diffusion systems $S_h(\Sigma_R)$ and those of symbolic models $S_{\ol\rho}(\ol\Sigma_R)$.
In order to prove this result, we raise a supplementary assumption on $\delta$-ISS-$M_q$ Lyapunov functions $V$.
\begin{assumption}\label{assum1}
	For a $\delta$-ISS-$M_q$ Lyapunov function $V$ as in Definition \ref{definition4}, following conditions hold:
	\begin{compactitem}
		\item[(i)] for function $\omega$ in Definition~\ref{definition4} there exists $\kappa\in\R^+$ such that $\omega(x,\h x)\geq \kappa V(x,\h x)$ for all $ x,\h x\in\R^n$; 
		\item[(ii)] there exists some constant $\alpha\in\R_0^+$ such that
		\begin{align}\label{assum_eq}
		&\frac{1}{2}\mathsf{Tr}(g(\zeta)g^T(\h\zeta)\partial_{\zeta(0),\zeta(0)}V(\zeta(0),\h\zeta(0)))-\frac{1}{2}\mathsf{Tr}\bigg(\begin{bmatrix}
		g(\zeta)\\ 
		g(\h\zeta)
		\end{bmatrix} \begin{bmatrix}
		g^T(\zeta)& g^T(\h\zeta)
		\end{bmatrix}\mathcal{H}(V)(\zeta(0),\h\zeta(0))\bigg)\nonumber\\
		&+\sum_{i=1}^{\tilde{r}}\lambda_i\Big(V\big(\zeta(0)+r(\zeta)e_i,\h\zeta(0)\big)- V\big(\zeta(0)+r(\zeta)e_i,\h\zeta(0)+r(\h\zeta)e_i\big)\Big)\leq \alpha
		\end{align}
		for all $\zeta,\h\zeta\in\mathcal{C}([-\tau,0];\R^n)$, where $ \mathcal{H}(V)(x,\h x) $ denotes the Hessian matrix of $ V $ at $x,\h x\in\R^{n} $. 
	\end{compactitem}
\end{assumption} 
Note that condition (ii) in Assumption~\ref{assum1} is not restrictive, provided that $V$ is restricted to a compact subset of $\R^n \times \R^n$, the Hessian matrix $\mathcal{H}(V)(x,\h x)$ of $V$ is a positive semidefinite matrix in $\R^{2n\times2n}$, and $\partial_{x,x}V(x,\h x)\leq\ol P$ for some $\ol P$ a positive semidefinite matrix in $\R^{n\times n}$. With these conditions and Lipschitz assumptions on $g(\cdot)$ and $r(\cdot)$, one can always find $\alpha\in\R_0^+$ satisfying \eqref{assum_eq}.
\begin{theorem}\label{mainthm1}
	Consider an RJDS $\Sigma_R$ admitting a $\delta$-ISS-$M_q$ Lyapunov function $V$ for $q\geq1$ as in Definition \ref{definition4} and satisfying conditions in Assumption \ref{assum1}. For any abstraction parameters $ \ol\rho=(h,N,\zeta_s,\eta) $ and any $\varepsilon\in\R^+$ satisfying \eqref{asdf}, consider finite abstraction $S_{\ol\rho}(\ol\Sigma_R)$ and results in Theorem \ref{theorem3}. For any $\R^n$-valued output-run $\xi_{\zeta,\upsilon_h}$ of $S_h(\Sigma_R)$, there exists an $\R^n$-valued output-run $\tilde Y_{\ol\xi_{\zeta_s,x_{\ol\rho}(0)}(Nh),\upsilon_{\ol\rho}}$ of $S_{\ol\rho}(\ol\Sigma_R)$, and vice versa, such that following inequalities hold
	\begin{align}\label{rrr1}
	\mathbb{P}\left\{\sup_{0\leq k\leq T_d}\| \xi_{\zeta,\upsilon_h}(kh)-\tilde Y_{\ol\xi_{\zeta_s,x_{\ol\rho}(0)}(Nh),\upsilon_{\ol\rho}}(k)\|^q\geq\epsilon+\varepsilon+\gamma(\eta)\mid \zeta\right\} \leq 1-\mathsf{e}^{-\frac{\alpha T_d h}{\underline{\alpha}(\epsilon)}},\quad \text{if } \underline{\alpha}(\epsilon)\geq\frac{\alpha}{\kappa},
	\end{align}
	\begin{align}\label{rrr2}
	\mathbb{P}\left\{\sup_{0\leq k\leq T_d}\| \xi_{\zeta,\upsilon_h}(kh)-\tilde Y_{\ol\xi_{\zeta_s,x_{\ol\rho}(0)}(Nh),\upsilon_{\ol\rho}}(k)\|^q\geq\epsilon+\varepsilon+\gamma(\eta)\mid \zeta\right\} \leq \frac{(\mathsf{e}^{\kappa T_d h}-1)\alpha}{\kappa \underline{\alpha}(\epsilon)\mathsf{e}^{\kappa T_d h }},\quad \text{if } \underline{\alpha}(\epsilon)\leq\frac{\alpha}{\kappa}.
	\end{align}
\end{theorem}
\begin{proof}
	From the result in Theorem \ref{theorem3}, we have $  S_h(\ol\Sigma_R) \cong_\mathcal{S}^{\varepsilon+\gamma(\eta)} S_{\ol\rho}(\ol\Sigma_R) $ which implies $\|\ol\xi_{h,\zeta,\upsilon_h}-\ol\xi_{Nh,\zeta_s,x_{\ol\rho}}\|^q_{[-\tau,0]}\leq\varepsilon+\gamma(\eta)$ and consequently 
	\begin{align}
	\sup_{k\in\N_0}\|\ol\xi_{\zeta,\upsilon_h}(kh)-\tilde Y_{\ol\xi_{\zeta_s,x_{\ol\rho}(0)}(Nh),\upsilon_{\ol\rho}}(k)\|^q\leq\varepsilon+\gamma(\eta).
	\end{align}
	Hence, one obtains the following chain of (in)equalities:
	\begin{align}\label{ineq1}
	&\mathbb{P}\left\{\sup_{0\leq k\leq T_d}\| \xi_{\zeta,\upsilon_h}(kh)-\tilde Y_{\ol\xi_{\zeta_s,x_{\ol\rho}(0)}(Nh),\upsilon_{\ol\rho}}(k)\|^q\geq\epsilon+\varepsilon+\gamma(\eta)\mid \zeta\right\}\nonumber\\
	&=\mathbb{P}\left\{\sup_{0\leq k\leq T_d}\| \xi_{\zeta,\upsilon_h}(kh)-\ol\xi_{\zeta,\upsilon_h}(kh)+\ol\xi_{\zeta,\upsilon_h}(kh)-\tilde Y_{\ol\xi_{\zeta_s,x_{\ol\rho}(0)}(Nh),\upsilon_{\ol\rho}}(k)\|^q\geq\epsilon+\varepsilon+\gamma(\eta)\mid \zeta\right\}\nonumber\\
	&\leq\mathbb{P}\left\{\sup_{0\leq k\leq T_d}\{\| \xi_{\zeta,\upsilon_h}(kh)-\ol\xi_{\zeta,\upsilon_h}(kh)\|^q+\|\ol\xi_{\zeta,\upsilon_h}(kh)-\tilde Y_{\ol\xi_{\zeta_s,x_{\ol\rho}(0)}(Nh),\upsilon_{\ol\rho}}(k)\|^q\}\geq\epsilon+\varepsilon+\gamma(\eta)\mid \zeta\right\}\nonumber\\
	&\leq\mathbb{P}\left\{\sup_{0\leq k\leq T_d}\underline{\alpha}(\| \xi_{\zeta,\upsilon_h}(kh)-\ol\xi_{\zeta,\upsilon_h}(kh)\|^q)\geq\underline{\alpha}(\epsilon)\mid \zeta\right\}\nonumber\\
	&\leq \mathbb{P}\left\{\sup_{0\leq k\leq T_d}V(\xi_{\zeta,\upsilon_h}(kh),\ol\xi_{\zeta,\upsilon_h}(kh))\geq\underline{\alpha}(\epsilon)\mid \zeta \right\}.\nonumber\\
	\end{align}
	From the properties of $V$ in Definition \ref{definition4} and Assumption \ref{assum1}, one has
		\begin{align}\label{ineq2}
		\mathcal{L}V&(\xi_{t,\zeta,\upsilon},\overline{\xi}_{t,\zeta,\upsilon})
			=\hspace{-.4em}\begin{bmatrix}\partial_x V & \hspace{-.4em}\partial_{\hat{x}} V\end{bmatrix}\hspace{-.4em}\begin{bmatrix}
				f(\xi_{t,\zeta,\upsilon},\upsilon(t))\nonumber\\ 
				f(\ol\xi_{t,\zeta,\upsilon},\upsilon(t))
			\end{bmatrix}\hspace{-.3em}+\hspace{-.2em}\frac{1}{2}\mathsf{Tr}(g(\xi_{t,\zeta,\upsilon})g^T\hspace{-.2em}(\xi_{t,\zeta,\upsilon})\partial_{x,x}V)\nonumber\\
			&+\hspace{-.2em}\sum_{i=1}^{\tilde{r}}\hspace{-.2em}\lambda_i\Big(V(\xi_{\zeta,\upsilon}(t)\hspace{-.2em}+\hspace{-.2em}r(\xi_{t,\zeta,\upsilon})e_i,\ol\xi_{\zeta,\upsilon}(t))\hspace{-.2em}-\hspace{-.2em}V(\xi_{\zeta,\upsilon}(t),\ol\xi_{\zeta,\upsilon}(t))\Big)\hspace{-.2em}\nonumber\\
			=&\begin{bmatrix}\partial_x V & \partial_{\hat{x}} V\end{bmatrix}\begin{bmatrix}
				f(\xi_{t,\zeta,\upsilon},\upsilon(t))\\ 
				f(\ol\xi_{t,\zeta,\upsilon},\upsilon(t))
			\end{bmatrix}+\frac{1}{2}\mathsf{Tr}\bigg(\begin{bmatrix}
				g(\xi_{t,\zeta,\upsilon})\\ 
				g(\ol\xi_{t,\zeta,\upsilon})
			\end{bmatrix} \begin{bmatrix}
			g^T(\xi_{t,\zeta,\upsilon}) & g^T(\ol\xi_{t,\zeta,\upsilon})
		\end{bmatrix}\mathcal{H}(V)\bigg)\nonumber\\
		&+\sum_{i=1}^{\tilde{r}}\hspace{-.2em}\lambda_i\Big(V(\xi_{\zeta,\upsilon}(t)\hspace{-.2em}+\hspace{-.2em}r(\xi_{t,\zeta,\upsilon})e_i,\ol\xi_{\zeta,\upsilon}(t)\hspace{-.2em}+\hspace{-.2em}r(\ol\xi_{t,\zeta,\upsilon})e_i)-V(\xi_{\zeta,\upsilon}(t),\ol\xi_{\zeta,\upsilon}(t))\Big)\hspace{-.2em}\nonumber\\
		&+\hspace{-.2em}\frac{1}{2}\mathsf{Tr}(g(\xi_{t,\zeta,\upsilon})g^T\hspace{-.2em}(\xi_{t,\zeta,\upsilon})\partial_{x,x}V)+\sum_{i=1}^{\tilde{r}}\lambda_iV(\xi_{\zeta,\upsilon}(t)+r(\xi_{t,\zeta,\upsilon})e_i,\ol\xi_{\zeta,\upsilon}(t))\nonumber\\
		&-\frac{1}{2}\mathsf{Tr}\bigg(\begin{bmatrix}
			g(\xi_{t,\zeta,\upsilon})\\ 
			g(\ol\xi_{t,\zeta,\upsilon})
		\end{bmatrix} \begin{bmatrix}
		g^T(\xi_{t,\zeta,\upsilon})& g^T(\ol\xi_{t,\zeta,\upsilon})
	\end{bmatrix}\mathcal{H}(V)\bigg)-\sum_{i=1}^{\tilde{r}}\lambda_i V(\xi_{\zeta,\upsilon}(t)+r(\xi_{s,\zeta,\upsilon})e_i,\ol\xi_{\zeta,\upsilon}(t)+r(\ol\xi_{t,\zeta,\upsilon})e_i)\nonumber\\
	\leq&-\kappa V(\xi_{\zeta,\upsilon}(t),\ol\xi_{\zeta,\upsilon}(t))+\alpha,\nonumber\\
	\end{align}
	for any $t\in\R_0^+$, $\upsilon_h\in\mathcal{U}_h$ and $\zeta\in\mathcal{C}^b_{\mathcal{F}_0}([-\tau, 0];\R^n)$. Using inequalities \eqref{ineq1}, \eqref{ineq2}, and the result in \cite[Theorem 1, pp. 79]{kushnerbook}, one obtains the relations \eqref{rrr1} and \eqref{rrr2}. In a similar way, we can prove the other direction. 
\end{proof}
Inequalities \eqref{rrr1} and \eqref{rrr2} lower bound the probability such that the distance between output trajectories of the finite abstraction and those of the corresponding sampled retarded jump-diffusion system remains close over finite time horizon. One can leverage the result in Theorem \ref{mainthm1} to synthesize control policies for finite abstractions and refine them to the original systems while providing guarantee on the probability of satisfaction. Similar relations were established in \cite{julius1}, \cite{zamani2014symbolic}, and \cite{Abolfazl2019} for stochastic hybrid systems, stochastic control systems, and interconnected stochastic control systems and their (in)finite abstractions. For a detailed discussion on how inequalities \eqref{rrr1} and \eqref{rrr2} can be used to provide a lower bound on the probability of satisfying the specification for the original systems, we kindly refer the interested readers to \cite[Section $6$]{Abolfazl2019}.

\section{An Example}
To show the effectiveness of the proposed results, we consider a simple thermal model of ten-room building as shown schematically in Figure \ref{ten_room}. The model used here is similar to that used in \cite{zamani2015symbolic}. In addition, we modified arrangement of the rooms to increase state-space dimensions and considered that the dynamic is affected by delays in states and by jumps (modeling door and window opening). The dynamic of the considered delayed jump-diffusion system $ \Sigma_D $ is given by the following delayed stochastic differential equations: 
\begin{figure}[tbp]
	\includegraphics[scale=0.7]{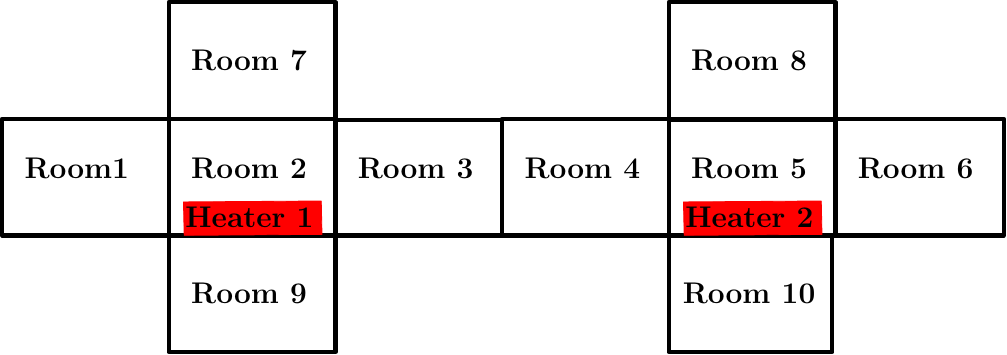}
	\caption{A schematic of ten-room building. }
	\label{ten_room}
\end{figure} 
\small
\begin{align*}
	\diff\xi_1(t)=&(\alpha_{21}(\xi_2(t)-\xi_1(t))+\alpha_{e_1}(T_e-\xi_1(t))-\alpha_{1\tau_1}\xi_1(t-\tau_1)) \diff t+(g_1\xi_1(t)+g_{1\tau_2}\xi_1(t-\tau_2))\diff W_t^1+r_1\xi_1(t) \diff P_t^1,\nonumber\\
	\diff\xi_2(t) =&(\alpha_{12}(\xi_1(t) - \xi_2(t)) + \alpha_{72}(\xi_7(t) - \xi_2(t)) + \alpha_{92}(\xi_9(t) - \xi_2(t)) + \alpha_{32}(\xi_3(t) - \xi_2(t)) + \alpha_{e_2}(T_e - \xi_2(t))\nonumber\\&-\alpha_{2\tau_1}\xi_2(t-\tau_1)+ \alpha_{H_1}(T_h - \xi_2(t))\upsilon_1) \diff t+(g_2\xi_2(t)+g_{2\tau_2}\xi_2(t-\tau_2))\diff W_t^2+r_1\xi_2(t) \diff P_t^2,\nonumber\\
	\diff\xi_3(t) =&(\alpha_{23}(\xi_2(t) - \xi_3(t))+\alpha_{43}(\xi_4(t) - \xi_3(t))+\alpha_{e_3}(T_e - \xi_3(t))-\alpha_{3\tau_1}\xi_3(t-\tau_1)) \diff t\nonumber\\&+(g_3\xi_3(t)+g_{3\tau_2}\xi_3(t-\tau_2))\diff W_t^3+r_3\xi_3(t) \diff P_t^3,\nonumber\\
	\diff\xi_4(t) =&(\alpha_{34}(\xi_3(t) - \xi_4(t))+\alpha_{54}(\xi_5(t) - \xi_4(t))+\alpha_{e_4}(T_e - \xi_4(t))-\alpha_{4\tau_1}\xi_4(t-\tau_1)) \diff t\nonumber\\&+(g_4\xi_4(t)+g_{4\tau_2}\xi_4(t-\tau_2))\diff W_t^4+r_4\xi_4(t) \diff P_t^4,\nonumber\\
	\diff\xi_5(t) =&(\alpha_{45}(\xi_4(t) - \xi_5(t)) + \alpha_{85}(\xi_8(t) - \xi_5(t)) + \alpha_{105}(\xi_{10}(t) - \xi_5(t)) + \alpha_{65}(\xi_6(t) - \xi_5(t)) + \alpha_{e_5}(T_e - \xi_5(t))\nonumber\\&-\alpha_{5\tau_1}\xi_5(t-\tau_1)+ \alpha_{H_2}(T_h  -\xi_5(t))\upsilon_2) \diff t+(g_5\xi_5(t)+g_{5\tau_2}\xi_5(t-\tau_2))\diff W_t^5+r_5\xi_5(t) \diff P_t^5,\nonumber\\
	\diff\xi_6(t) =&(\alpha_{56}(\xi_5(t) - \xi_6(t))+\alpha_{e_6}(T_e - \xi_6(t))-\alpha_{6\tau_1}\xi_6(t-\tau_1)) \diff t+(g_6\xi_6(t)+g_{6\tau_2}\xi_6(t-\tau_2))\diff W_t^6+r_6\xi_6(t) \diff P_t^6,\nonumber\\
	\diff\xi_7(t) =&(\alpha_{27}(\xi_2(t) - \xi_7(t))+\alpha_{e_7}(T_e - \xi_7(t))-\alpha_{7\tau_1}\xi_7(t-\tau_1)) \diff t+(g_7\xi_7(t)+g_{7\tau_2}\xi_7(t-\tau_2))\diff W_t^7+r_7\xi_7(t) \diff P_t^7,\nonumber\\
	\diff\xi_8(t) =&(\alpha_{58}(\xi_5(t) - \xi_8(t))+\alpha_{e_8}(T_e - \xi_8(t))-\alpha_{8\tau_1}\xi_8(t-\tau_1)) \diff t+(g_8\xi_8(t)+g_{8\tau_2}\xi_8(t-\tau_2))\diff W_t^8+r_8\xi_8(t) \diff P_t^8,\nonumber\\
	\diff\xi_9(t) =&(\alpha_{29}(\xi_2(t) - \xi_9(t))+\alpha_{e_9}(T_e - \xi_9(t))-\alpha_{9\tau_1}\xi_9(t-\tau_1)) \diff t+(g_9\xi_9(t)+g_{9\tau_2}\xi_9(t-\tau_2))\diff W_t^9+r_9\xi_9(t) \diff P_t^9,\nonumber\\
	\diff\xi_{10}(t) =&(\alpha_{510}(\xi_5(t) - \xi_{10}(t))+\alpha_{e_{10}}(T_e - \xi_{10}(t))-\alpha_{10\tau_1}\xi_{10}(t-\tau_1)) \diff t+(g_{10}\xi_{10}(t)+g_{10\tau_2}\xi_{10}(t-\tau_2))\diff W_t^{10}\nonumber\\&+r_{10}\xi_{10}(t) \diff P_t^{10},\nonumber
\end{align*}
\normalsize where the terms $ W^i_t $ and $ P^i_t $, $ i\in\{1,2,\ldots,10\} $, denote the standard Brownian motion and Poisson process with rate $ \lambda_i=0.1 $, respectively; $ \xi_i $, $ i\in\{1,2,\ldots,10\} $, denote the temperature in each room; $ T_e=15 $ (degree Celsius) is the external temperature; $ T_{H_1}=T_{H_2}=100 $ are the temperatures of two heaters; $ \tau_1=10 $ time units and $ \tau_2=5 $ time units are state delays in drift and diffusion terms, respectively; and the control inputs $ \upsilon_1 $, $ \upsilon_2 $ are amounted to $1$ if corresponding heaters are on and to $0$ if corresponding heaters are off. Here, we assume that at most one heater is on at each time instance which results in the finite input set $\mathsf{U}=\{(1,0),(0,1),(0,0)\}$. The system parameters are chosen as $ \alpha_{ij}=5\times10^{-2}$ for all $i\neq j$ and $i,j\in \{1,2,\ldots,10\}$, $ \alpha_{e2}=\alpha_{e5}=8\times10^{-3} $, $ \alpha_{e1}=\alpha_{e3}=\alpha_{e4}=\alpha_{e6}=\alpha_{e7}=\alpha_{e8}=\alpha_{e9}=\alpha_{e10}=5\times10^{-3} $, $ \alpha_{H_1}=\alpha_{H_2}=3.6\times10^{-3 }$, $ \alpha_{i\tau_1}=1\times10^{-4} $, $ g_{i}=2\times10^{-3} $, $ g_{i\tau_2}=1\times10^{-4} $, and $ r_i=1\times10^{-5} $, for $ i\in\{1,2,\ldots,10\} $. In this example, we work on the subset $\mathsf{D}=[17,22]^{10}$ of state space of $\Sigma_D$. 
\begin{figure}[tbp]
	\includegraphics[scale=0.6]{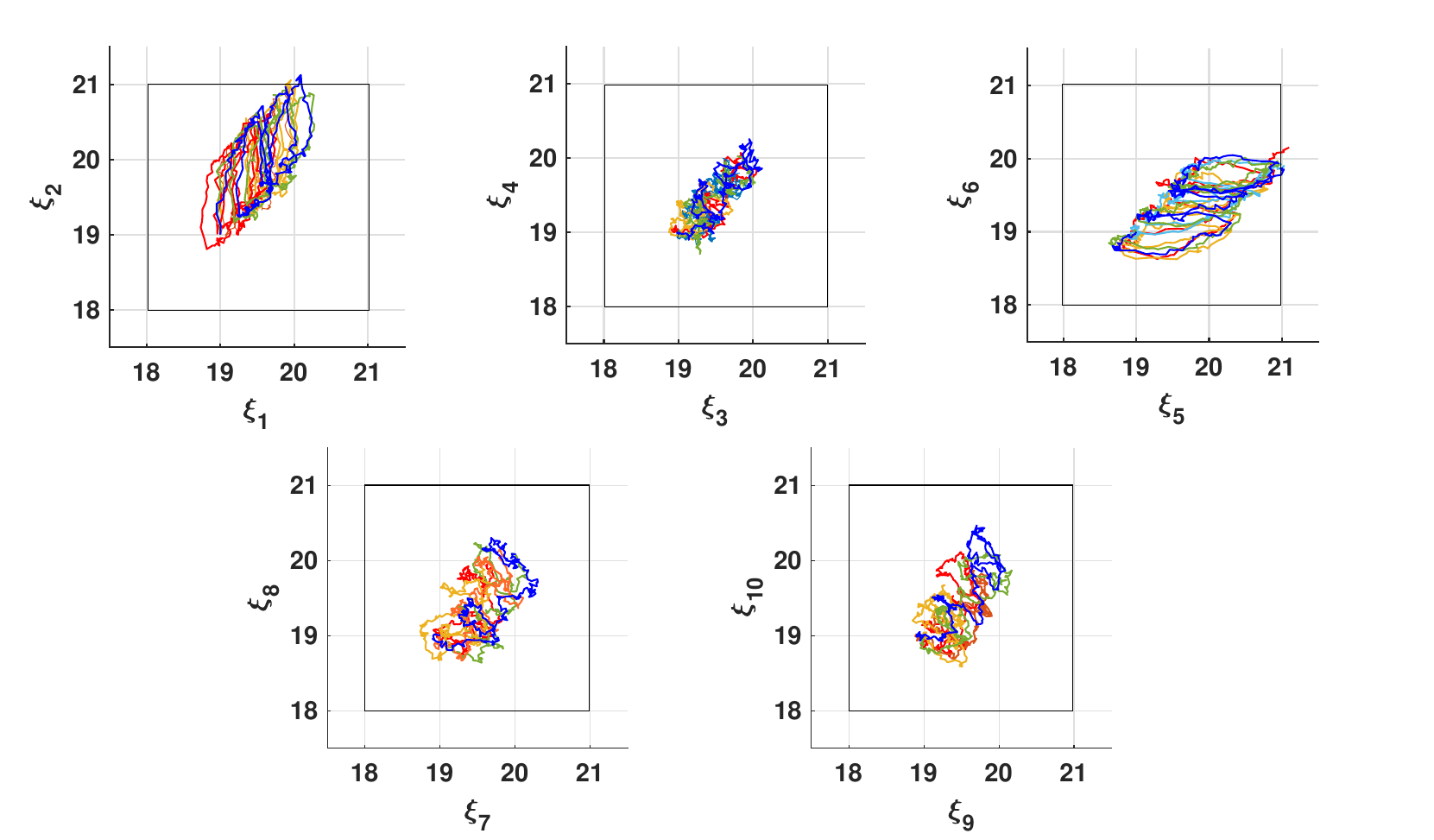}
	\caption{A few realizations of the solution process $ \xi_{\zeta,\upsilon} $ with initial condition $ \zeta\equiv[19, 19, 19, 19, 19, 19, 19, 19, 19, 19]^T$. }
	\label{safe_fig}
\end{figure} 
Using Lyapunov function $V(x,\h x)=(x-\h x)^TP(x-\h x)$, for all $x,\h x\in \mathsf{D}$, where 
$$ P\hspace{-.2em}=\hspace{-.2em}\begin{bmatrix}1.1126 &  -0.1205 &   0.0031 &   0.0004 &  -0.0001 &   0.0000  &  0.0036  &  0.0000 &   0.0036  &  0.0000\\
-0.1205 &   1.5002 &  -0.1232 &   0.0026 &   0.0006 &  -0.0001 &  -0.1205 &  -0.0001 &  -0.1205 &  -0.0001\\
0.0031 &  -0.1232 &   1.2308 &  -0.1182 &   0.0026  &  0.0004 &   0.0031  &  0.0004  &  0.0031  &  0.0004\\
0.0004  &  0.0026 &  -0.1182 &   1.2308 &  -0.1232  &  0.0031  &  0.0004  &  0.0031  &  0.0004  &  0.0031\\
-0.0001  &  0.0006 &   0.0026 &  -0.1232 &   1.5002 &  -0.1205 &  -0.0001 &  -0.1205 &  -0.0001 &  -0.1205\\
0.0000 &  -0.0001 &   0.0004  &  0.0031 &  -0.1205 &   1.1126 &   0.0000 &   0.0036  &  0.0000  &  0.0036\\
0.0036 &  -0.1205 &   0.0031  &  0.0004 &  -0.0001  &  0.0000 &   1.1126 &   0.0000  &  0.0036  &  0.0000\\
0.0000 &  -0.0001 &   0.0004  &  0.0031 &  -0.1205  &  0.0036  &  0.0000 &   1.1126  &  0.0000  &  0.0036\\
0.0036 &  -0.1205   & 0.0031  &  0.0004  & -0.0001 &   0.0000  &  0.0036 &   0.0000  &  1.1126  &  0.0000\\
0.0000  & -0.0001 &   0.0004  &  0.0031 &  -0.1205  &  0.0036 &   0.0000  &  0.0036 &   0.0000  &  1.1126
\end{bmatrix}\hspace{-.2em},
 $$ one can readily obtain the functions $\ul\alpha(s)=0.5029s$, $\ol\alpha(s)=0.8197s$, for $q=2$ satisfying conditions (i) and (ii) in Definition \ref{definition4}. Using the results in Lemma~\ref{lem3}, one obtains function $ \beta(s,t)=\e^{-\kappa t} s$, $ \forall s\in\R^+_0 $, with $ \kappa=0.6667 $. \\
By considering $ q=2 $, a constant function $\zeta_s\equiv[17, 17, 17, 17, 17, 17,17,17,17,17]^T\in \mathcal{C}([-\tau, 0];\R^n)$, where $\tau=10$, a precision $ \varepsilon=0.05 $ and by fixing sampling time $ h=15 $ time units, one can obtain temporal horizon $ N=9$ for $  S_\rho(\ol\Sigma_D) $, satisfying inequality \eqref{asdf} in Theorem~\ref{theorem2}. Therefore, the resulting cardinality of the set of states of $ S_\rho(\ol\Sigma_D) $ is $ 3^{9}= 19683$ and number of transitions are $3^{10}=59049$. The CPU time taken for computing finite abstraction with $N=9$ is accounted to 0.015. From inequalities \eqref{rrr1} and \eqref{rrr2}, one can observe that higher precision of the abstraction helps to improve the bound on the closeness of the trajectories. To get higher precision, one can increasing the value of $N$. For example, for $N=11$ one gets an abstraction with precision $\varepsilon=0.008$. However, it increases the size of the abstraction and the computation time which are $531441$ and $0.413$ seconds, respectively. Remark that since the input set is finite, the finite dimensional abstraction $ S_\rho(\ol\Sigma_D) $ is also symbolic. 

\begin{figure}[tbp]
	\includegraphics[scale=0.6]{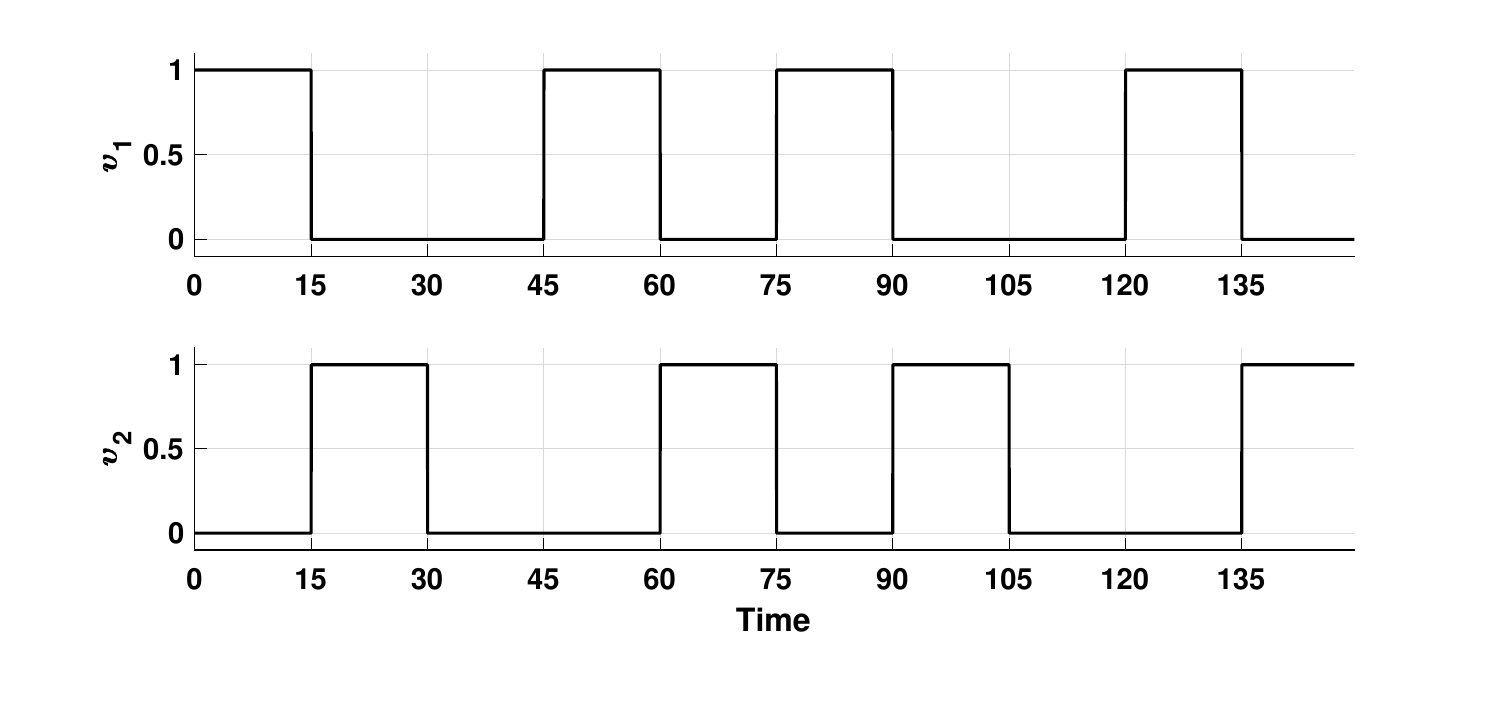}
	\caption{The evolution of input signals $ \upsilon_1 $ and $ \upsilon_2 $.}
	\label{input_fig}
\end{figure}  
\begin{figure}[tbp]
	\includegraphics[scale=0.6]{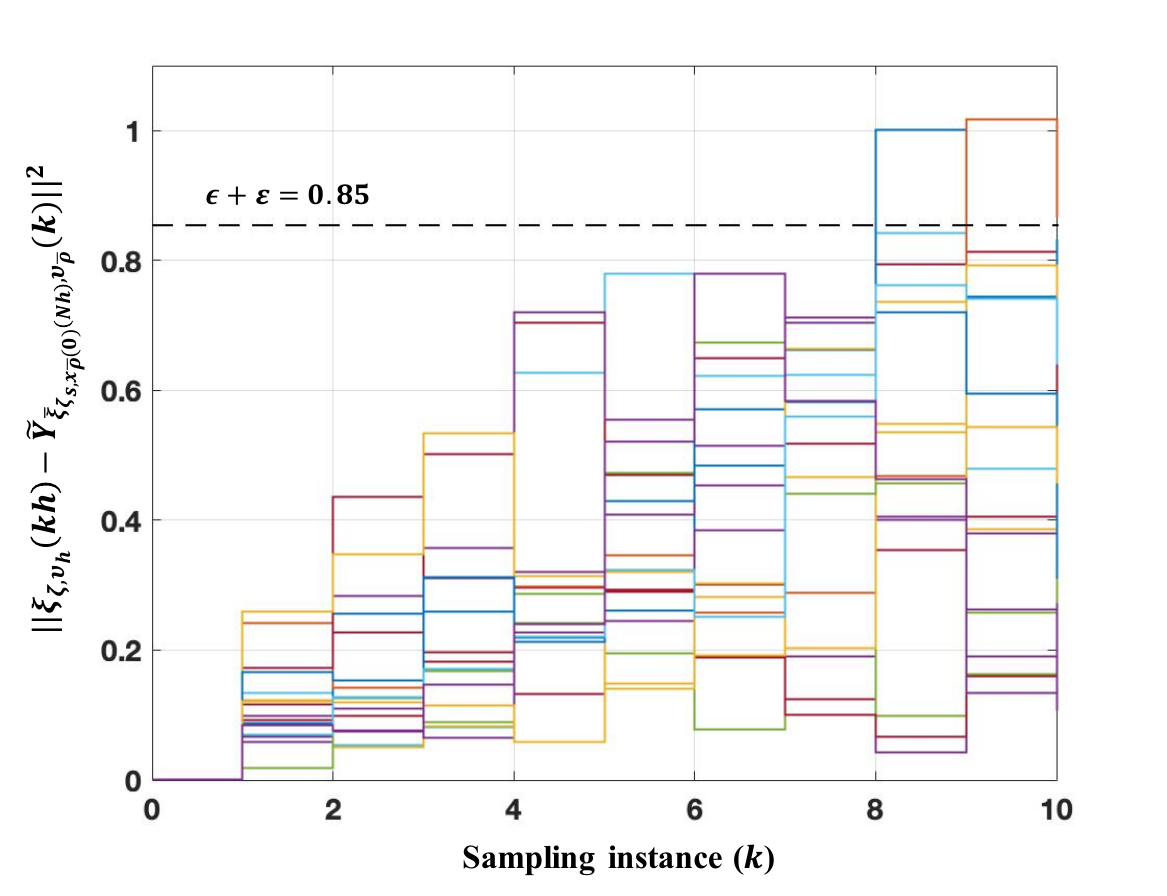}
	\caption{A few realizations of $ \| \xi_{\zeta,\upsilon_h}(kh)-\tilde Y_{\ol\xi_{\zeta_s,x_{\ol\rho}(0)}(Nh),\upsilon_{\ol\rho}}(k)\|^2$ for $T_d=10$ at sampling instances.}
	\label{closeness}
\end{figure}
Now consider the objective to design a controller enforcing the trajectories of $ \Sigma_D $ to stay within comfort zone $ \mathsf{W}=[18, 21]^{10} $. This corresponds to the LTL specification $\square \mathsf{W} $. The computation of the symbolic model $ S_{\rho}(\ol\Sigma_D) $ and the controller synthesis have been performed using tool {\tt QUEST} \cite{Jagtap2017} on a computer with CPU 3.5GHz Intel Core i7. The CPU time taken to synthesize controller for $N=9$ and $N=11$ is accounted to 2.07 and 2.56 seconds, respectively.\\
For the given dynamics and $\delta$-ISS-$M_2$ Lyapunov function $V$ on $\mathsf{D}$, we have
$\alpha=0.0012$ in Assumption~\ref{assum1}. For $\epsilon=0.8$, using the result in Theorem~\ref{mainthm1}, we guarantee that the the distance between output of $S_h(\Sigma_D)$ and that of $S_\rho(\ol\Sigma_D)$ will not exceed $\epsilon+\varepsilon=0.85$ over the discrete time horizon $\{0,15,\ldots,150\}$ with probability at least 75.8\%. To get a better lower bound for the aforementioned probability, one can reduce the time horizon or increase $\epsilon$. For example, if we consider discrete time horizon $\{0,15,\ldots,75\}$ and $\epsilon=1$, the lower bound on the probability will be 89.5\%. 
Figure~\ref{safe_fig} shows a few realizations of the closed-loop solution process $ \xi_{\zeta,\upsilon} $ starting form initial condition $ \zeta\equiv [19, 19, 19, 19, 19, 19, 19, 19, 19, 19]^T\in X_{h0} $. The synthesized control policies $ \upsilon_1 $ and $ \upsilon_2 $ are shown in Figure~\ref{input_fig}. The obtained probability (i.e., at least 75.8\%) is also empirically verified by computing distance between output trajectories of $S_h(\Sigma_R)$ and of $S_\rho(\ol\Sigma_R)$ (i.e., $ \| \xi_{\zeta,\upsilon_h}(kh)-\tilde Y_{\ol\xi_{\zeta_s,x_{\ol\rho}(0)}(Nh),\upsilon_{\ol\rho}}(k)\|^2$) using $5000$ runs. Several realizations are shown in Figure \ref{closeness}.
\bibliographystyle{alpha}
\bibliography{IEEEtran1}

\end{document}